\documentclass[12pt]{article}

\usepackage{epsfig,amsfonts,amsthm,amssymb,latexsym,amsmath}

\textwidth 12.2cm
\textheight 18.5cm
\oddsidemargin=0.7 true in
\evensidemargin=0.7 true in

\theoremstyle{plain}
\newtheorem{theorem}{Theorem}[section]
\newtheorem{corollary}[theorem]{Corollary}
\newtheorem{lemma}[theorem]{Lemma}
\newtheorem{maintheorem}{Theorem}
\theoremstyle{remark}
\newtheorem{remark}[theorem]{Remark}

\theoremstyle{definition}
\newtheorem{definition}[theorem]{Definition}

\newcommand{\field}[1]{\mathbb{#1}}

\renewcommand{\natural}{\field{N}}

\newcommand{\be} {\beta}        
\newcommand{\ga} {\gamma}    \newcommand{\Ga}{\Gamma}
\newcommand{\de} {\delta}       

\newcommand{\eps}{\varepsilon}

\newcommand{\la} {\lambda}      \newcommand{\La}{\Lambda}

\newcommand{\si} {\sigma}

       \newcommand{\Om}{\Omega}

\newcommand{\N}{\mathbb{N}}
\newcommand{\R}{\mathbb{R}}
\newcommand{\supp}{\operatorname{supp}}
\newcommand{\diam}{\operatorname{Diam}}

\newcommand{\lip}{\operatorname{Lip}_1(X;\R)}

\newcommand{\SM}{{\mathcal M}}

\newcommand{\cK}{\mathcal{K}}

\newcommand{\cP}{\mathcal{P}}

\newcommand{\cB}{\mathcal{B}}

\newcommand{\cF}{\mathcal{F}}

\newcommand{\cA}{\mathcal{A}}

\newcommand{\p}{\field{P}}
\newcommand{\SK}{{\mathcal K}}
\newcommand{\SF}{{\mathcal F}}

\begin{document}

\title{On weakly hyperbolic iterated function systems}

\author{Alexander\ Arbieto\footnote{Corresponding author.}\thanks{A.A. was supported by CNPq, FAPERJ, CAPES and PRONEX-DS.}
\ and\ Andr\'e\ Junqueira\thanks{A.J. was supported by CNPq and FAPERJ.}
\ and\ Bruno\ Santiago\thanks{B.S. was supported by CNPq and FAPERJ.} 
}
\date{}

\maketitle

\baselineskip=0.9\normalbaselineskip

\begin{abstract}
We study weakly hyperbolic iterated function systems on compact {metric} spaces, as defined by Edalat in \cite{E}, 
but in the more general setting of compact parameter space. We prove the existence of attractors, both in the topological and measure theoretical viewpoint 
and the ergodicity of invariant measure. We also define weakly hyperbolic iterated function systems for complete {metric} 
spaces and compact parameter space, extending the above mentioned definition. Furthermore, we study the question of existence of attractors in this setting. 
Finally, we prove a version of the results in \cite{BV1}, about drawing the attractor (the so-called the chaos game), for compact parameter space.
\end{abstract}

\smallskip
\noindent
{\small{\bf Keywords:}  
Iterated function systems, attractors, chaos game.}

\smallskip
\noindent
{\small{\bf Mathematical subject classification:} 
Primary: 37B99; Secondary: 37A05.}

\baselineskip=\normalbaselineskip

\section{Introduction}

Iterated function systems (IFS) were introduced in \cite{H} (although some results appeared earlier in \cite{Wi}), as a unified way of generate a broad class of fractals. 
Nowadays, such systems occurs in many places in mathematics and other scientific areas, like image processing \cite{B2}. IFS can also be considered as skew-products over the shift map. 
Therefore, they can also be considered as random dynamical systems, like in \cite{CSS}. 

In \cite{H}, Hutchinson introduced the theory of hyperbolic IFS. He considered a finite collection of contractions over a complete metric space. He was interested in constructing attractors, both in the topological and measure-theoretical viewpoint. 
Thus, he built some operators from the IFS, which nowadays are called the Hutchinson-Barnsley and transfer operators.
This theory and the fractal theory was largely disseminated by the book \cite{B1}.

After this seminal work of Hutchinson, many authors proposed several generalizations of his results. One direction was to weaken the hyperbolicity assumption, allowing some weak forms of contraction. For instance, we have the so-called average 
contraction with respect to a probability measure, studied in \cite{BDEG} and \cite{CSS}. Also, we have the $\phi$-contractions studied by \cite{JL} and \cite{M}.

Following this line of research Edalat \cite{E} defined the notion of weakly hyperbolic IFS (see the  definition \ref{wh}) as a finite collection of maps on a compact metric space such that the diameter of the space by any combination of the maps
goes to zero. This definition allow some non-contractions which were ruled out in the previous settings to obtain a topological attractor. 

Another way to extend the results of Hutchinson is to enlarge the parameter space. In Hutchinson's paper the parameter space is always finite. 
In \cite{Ha}, this theory was extended to the case when the parameter space is an infinite countable set. In \cite{L} and \cite{Me} the authors consider compact metric spaces as the parameter {space}. 
However, in those contexts, only uniform contractions and average contractions {were} studied.

One of the purposes of this article is to study these questions in the setting of weakly hyperbolic IFS with compact parameter space, thus unifying and extending some of the previous results. 
In particular, we obtain the existence of topological and measure-theoretical attractors. Moreover, we extend the notion of weakly hyperbolic IFS for complete metric spaces and we discuss and give partial results about the existence of such attractors.

Let us make some comments about our proofs. In the compact case, the idea is to show that our definition satisfies a well known property called point fibered property as mentioned in \cite{BV2} by Barnsley and Vince. This property, in a stronger form, was also studied by Mat\'e, in \cite{M}, with the name of property $(P*)$. So, one step is to prove that weak hyperbolicity implies this property. We stress that in the complete case, we still obtain the existence of topological attractors using weak hyperbolicity. However, we still cannot prove the existence of measure-theoretical attractor using only weak hyperbolicity. Nevertheless, we also have some partial results about this.

Moreover, in the compact case we prove ergodicity of the measure-theoretical attractor for the systems involved. Also, inspired by the work of Barnsley-Vince in \cite{BV1}, we also prove that most orbits can draw the attractor (see the precise definition below) with respect to some special measures in the parameter space. We remark that this property is called ``chaos game'' in Barnsley-Vince's work. 

The rest of this introduction is devoted to give precise definitions and statements of our results.

\subsection{Definitions}

{Let $\La$ be a compact metric space and $X$ be a complete metric space}. 
{A continuous map $w:\Lambda \times X \to X$ is called an \emph{Iterated Function System} (IFS for short)}. 
The space $\Lambda$ is called the \emph{parameter space} and $X$ is called the \emph{phase space}. 
The space $\La^{\N}$ of infinite words with alphabet in $\La$, endowed with the product topology will be denoted by $\Om:=\La^{\N}$. Given a fixed parameter $\la\in\La$, we will denote by $w_{\lambda}:X \to X$ the partial map generated by this parameter, 
which is defined by $w_{\lambda}(x):=w(\lambda,x)$. 

In this paper we shall investigate IFS with compact parameter space.

Let us denote the map $w_{\lambda_{1}...\lambda_{n}}:=w_{\lambda_{1}}\circ ... \circ w_{\lambda_{n}}$, where $(\lambda_{1},...,\lambda_{n})\in\La^{n}$. For each $n\in \N$ we denote by $w^{n}$ the IFS 
from $\Lambda^{n}\times X$ to $X$, given by $$w^n(\lambda_{1},...,\lambda_{n},x):=w_{\lambda_{1},...,\lambda_{n}}(x).$$ 

{Let us recall the definition of weakly hyperbolic IFS, as introduced by Edalat in \cite{E}.}

\begin{definition}
If $X$ is a compact metric space and $\Lambda$ is any compact metric space then we say that an IFS $w:\Lambda\times X\to X$ is \emph{weakly hyperbolic} if for every $\sigma\in \Om$ we have:
$$\lim_{n\to \infty}Diam(w_{\sigma_{1}...\sigma_{n}}(X))=0$$
\label{wh}
\end{definition}

{In \cite{E} Edalat considered weakly hyperbolic IFS with finite parameter space. One of the goals of this paper is to extend his results to the more general
setting of arbitrary compact metric spaces as a parameter space}

\subsection{The Topological Attractor}
First, we recall the Hausdorff topology. Let us denote by $\mathcal{K}(X)$ the family of all compact subsets of $X$. We endow it with the Hausdorff metric as follows. Let $d(x,F)=\inf \{d(x,y);y\in F\}$. The Hausdorff  metric is given by
$$d_H(A,B)=\sup \{d(a,B),d(b,A): a\in A, b\in B \}\,\,\textrm{for}\,\,A,B \in \mathcal{K}(X)$$
If $X$ is a complete (resp. compact) metric space, it can be proved (see \cite{B1}) that $(\mathcal{K}(X),d_H)$ is also a complete (resp. compact) metric space. 
The \emph{Hutchinson-Barnsley operator} $\mathcal{F}: \mathcal{K}(X)\longrightarrow \mathcal{K}(X)$ is given by:
$$\mathcal{F}(A):= \bigcup_{\lambda \in \Lambda}w_{\lambda}(A)=w(\La\times A),\,\,\textrm{for}\,\,A\in \mathcal{K}(X).$$

\begin{definition}
An IFS $w$ has an \emph{attractor} $A\in \mathcal{K}(X)$, if there exists an open neighborhood $U$ of $A$ (the \emph{basin of attraction})
such that $\mathcal{F}^{n}(B)\to A$ in the Hausdorff topology for every $B\in \mathcal{K}(X)$, with $B\subset U$. If $A\in \mathcal{K}(X)$ is a fixed point of $\mathcal{F}$ then we say that $A$ is an \emph{invariant set} by $w$. If $U=X$ then the IFS has a \emph{global attractor}.
\end{definition}

{We shall deal with attractors which might not be global attractors only in Section~\ref{s.caos}.} Thus, to simplify the notation, we make the following convention: 
when we say that an IFS has an attractor, but we do not make any comment about the basin, we shall be talking about global attractors. 

Our first result gives the existence of global attractors for weakly hyperbolic IFS.

\begin{maintheorem}
\label{thm:2}
Let $w$ be a weakly hyperbolic IFS on a compact metric space $X$ and with a compact parameter space $\Lambda$. Then $\mathcal{F}$ has an attractor $K$ that is also a compact invariant set. 
Furthermore, we have that $w_{\sigma_{1}}\circ ... \circ w_{\sigma_{n}}$ has a unique fixed point $\textrm{for all}\,\, \sigma \in \Om\,\,\textrm{and}\,\, n\geq1$ and also $K$ is the closure of these fixed points.
\end{maintheorem}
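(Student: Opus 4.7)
The plan is to construct the attractor $K$ as the image of a coding map $\pi:\Om\to X$. First, since $\La$ is compact, $\Om=\La^{\N}$ is compact in the product topology. Define $f_n:\Om\to[0,\infty)$ by $f_n(\sigma)=\diam(w_{\sigma_1\ldots\sigma_n}(X))$. Continuity of $w$ together with compactness of $X$ makes $(\la_1,\ldots,\la_n)\mapsto w_{\la_1\ldots\la_n}(X)$ continuous into $(\cK(X),d_H)$, so each $f_n$ is continuous on $\Om$. The sequence is pointwise decreasing because $w_{\sigma_1\ldots\sigma_{n+1}}(X)\subset w_{\sigma_1\ldots\sigma_n}(X)$, and converges pointwise to zero by weak hyperbolicity. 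Compactness of $\Om$ and Dini's theorem therefore yield \emph{uniform} convergence: $\delta_n:=\sup_{\sigma\in\Om}f_n(\sigma)\to 0$. This uniformity is the heart of the whole argument.

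From this, for each $\sigma\in\Om$ the nested intersection $\bigcap_n w_{\sigma_1\ldots\sigma_n}(X)$ consists of a single point, denoted $\pi(\sigma)$; the map $\pi:\Om\to X$ is continuous because whenever $\sigma,\sigma'$ agree on the first $n$ coordinates, both $\pi(\sigma)$ and $\pi(\sigma')$ lie in $w_{\sigma_1\ldots\sigma_n}(X)$, forcing $d(\pi(\sigma),\pi(\sigma'))\leq\delta_n$. Setting $K:=\pi(\Om)$, the identity $w_\la(\pi(\sigma))=\pi(\la\sigma)$ (verified by observing both sides lie in $\bigcap_n w_\la(w_{\sigma_1\ldots\sigma_n}(X))$, a nested family of diameters tending to $0$) yields $\cF(K)=K$. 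For Hausdorff convergence of $\cF^n(B)$ to $K$, one writes $\cF^n(B)=\bigcup_{(\la_1,\ldots,\la_n)\in\La^n}w_{\la_1\ldots\la_n}(B)$: any element of $\cF^n(B)$ lies within $\delta_n$ of $\pi(\sigma)$ for a suitable extension $\sigma\in\Om$ of the word, and any $\pi(\sigma)\in K$ lies within $\delta_n$ of $w_{\sigma_1\ldots\sigma_n}(b)\in\cF^n(B)$ for any fixed $b\in B$. Hence $d_H(\cF^n(B),K)\leq\delta_n\to 0$, proving $K$ is a global attractor.

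For the fixed points, given $(\sigma_1,\ldots,\sigma_n)\in\La^n$ let $\tau\in\Om$ be its periodic extension, so that $w_{\tau_1\ldots\tau_{kn}}=w_{\sigma_1\ldots\sigma_n}^{\,k}$ for every $k\geq 1$. Applying the definition of $\pi$ to $\tau$ gives $\bigcap_{k\geq 1} w_{\sigma_1\ldots\sigma_n}^{\,k}(X)=\{\pi(\tau)\}$; thus any fixed point of $w_{\sigma_1\ldots\sigma_n}$ must equal $\pi(\tau)$, and conversely $w_{\sigma_1\ldots\sigma_n}(\pi(\tau))$ lies in every $w_{\sigma_1\ldots\sigma_n}^{\,k+1}(X)$ and therefore equals $\pi(\tau)$, establishing existence. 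Finally, periodic sequences in $\La^{\N}$ are dense in $\Om$ (given $\sigma$ and $N$, the periodic word of period $N$ whose initial block is $(\sigma_1,\ldots,\sigma_N)$ approximates $\sigma$ to the required precision), so by continuity of $\pi$ the set of fixed points of finite compositions is dense in $\pi(\Om)=K$, which is closed.

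The only genuinely technical step is the Dini-type passage from pointwise to uniform convergence of diameters; this is exactly what breaks down in the complete (non-compact) setting, and explains why that case must be handled separately later in the paper. Once $\delta_n\to 0$ is available, invariance of $K$, Hausdorff attraction, existence and uniqueness of fixed points for all finite compositions, and density of these fixed points in $K$ all follow mechanically from the coding map $\pi$ and its uniform modulus of continuity.
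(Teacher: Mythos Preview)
Your proof is correct and follows the same overall architecture as the paper---define a coding map $\pi:\Om\to X$ (the paper calls it $\Gamma$), set $K=\pi(\Om)$, and read off invariance, attraction, and density of periodic fixed points---but two of the key technical steps are handled by genuinely different (and cleaner) arguments.

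First, to pass from pointwise to uniform decay of $\diam(w_{\sigma_1\ldots\sigma_n}(X))$, the paper (Lemma~\ref{lemaprincipal}) argues by contradiction and performs an explicit diagonal extraction on the columns of an infinite matrix of parameters to produce a bad $\sigma\in\Om$. You instead observe that the $f_n$ are continuous, monotone decreasing, and converge pointwise to $0$ on the compact space $\Om$, and apply Dini's theorem. This is shorter and more transparent; what the paper's argument buys is that it makes no appeal to a named theorem and is completely self-contained.

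Second, for the existence and uniqueness of the fixed point of $g=w_{\sigma_1\ldots\sigma_n}$, the paper shows $g$ is an asymptotic contraction and invokes Jachymski's fixed point theorem (Theorem~\ref{Jachymski}). You bypass this entirely: the periodic extension $\tau$ of the word gives $\bigcap_k g^k(X)=\{\pi(\tau)\}$, from which existence and uniqueness are immediate. This is more elementary and keeps everything internal to the coding-map framework. Similarly, for density you use that periodic sequences are dense in $\Om$ together with continuity of $\pi$, whereas the paper repeats Hutchinson's nested-set argument $K\supset K_{\sigma_1}\supset K_{\sigma_1\sigma_2}\supset\cdots$; both are standard, but yours is marginally more direct given that $\pi$ has already been constructed.
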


\subsection{The Measure-Theoretical Attractor}
First, we recall the topologies on the measure space.
Let $(X,d)$ be a complete and separable metric space and consider the space 
$$\lip=\{f:X\to \R: |f(x)-f(y)|\leq d(x,y)\,\,\textrm{for all}\,\,x,y\in X\}.$$
Let $\mathcal{M}(X)$ be the set of the Borel probability measures $\mu$ such that $\mu(f):=\int_{X}f d\mu < +\infty$ for each $f\in \lip$. 
We define the \emph{Hutchinson metric} in $\mathcal{M}(X)$ by:
$$H(\nu,\mu)=\sup\left\{\left |\int_{X}fd\nu - \int_{X}fd\mu \right|;f\in\lip\right\}.$$
In \cite{K}, Kravchenco characterized the completeness of $\mathcal{M}(X)$ with the Hutchinson metric:

\begin{theorem}
Let $X$ be a separable metric space.
Then the space $(\mathcal{M}(X),H)$ is complete if and only if $X$ is complete.
\label{completitude}
\end{theorem}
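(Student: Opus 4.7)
The plan is to prove the two directions separately, using the canonical map $x \mapsto \delta_x$ from $X$ into $\mathcal{M}(X)$ together with a Prokhorov-type compactness argument on the measure side.

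For the necessity direction, suppose $(\mathcal{M}(X), H)$ is complete. A direct computation with the $1$-Lipschitz function $d(\cdot, x)$ shows that $H(\delta_x, \delta_y) = d(x,y)$, so $x \mapsto \delta_x$ is an isometric embedding. Given a Cauchy sequence $\{x_n\}$ in $X$, the corresponding Diracs converge in $H$ to some $\mu \in \mathcal{M}(X)$, and the task reduces to identifying $\mu$ as a Dirac. For any $1$-Lipschitz $f$ bounded by $M$, the function $f^2$ is $2M$-Lipschitz, so both $f$ and $f^2/(2M)$ are admissible test functions and
\[
\int f^2\, d\mu = \lim_n f(x_n)^2 = \Bigl(\lim_n f(x_n)\Bigr)^2 = \Bigl(\int f\, d\mu\Bigr)^2.
\]
Hence every bounded $1$-Lipschitz function has zero $\mu$-variance, so is $\mu$-almost surely constant. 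Since bounded $1$-Lipschitz functions separate points of $X$, the support of $\mu$ reduces to a single point $x$, giving $\mu = \delta_x$; then $d(x_n, x) = H(\delta_{x_n}, \delta_x) \to 0$ yields the desired limit in $X$.

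For the sufficiency direction, let $X$ be complete and separable and $\{\mu_n\}$ Cauchy in $H$. Testing $H$ against the $1$-Lipschitz function $d(\cdot, x_0)$ gives a uniform bound on the first moments $\int d(\cdot, x_0)\, d\mu_n$. The principal step is to establish tightness of $\{\mu_n\}$: since each $\mu_n$ is individually tight on the Polish space $X$, for every $\varepsilon > 0$ and every $m$ one covers a large-mass compact subset by finitely many balls of radius $1/m$, approximates the indicator of their union by an $m$-Lipschitz bump $g$, and uses the estimate $|\int g\, d\mu_n - \int g\, d\mu_{n_0}| \leq m\, H(\mu_n, \mu_{n_0})$ together with the Cauchy property to transfer the mass bound to all $\mu_n$ on a slight enlargement. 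Intersecting the resulting sets across $m$ produces a closed, totally bounded (hence compact, by completeness of $X$) set of mass uniformly greater than $1 - \varepsilon$. Prokhorov's theorem then yields a subsequence converging weakly to some Borel probability $\mu$, which lies in $\mathcal{M}(X)$ by lower semicontinuity of the first moment under weak convergence. Finally, one upgrades weak convergence of the subsequence to $H$-convergence by truncating arbitrary $1$-Lipschitz test functions outside a large ball and controlling the tail by uniform integrability of $d(\cdot, x_0)$, itself a consequence of the Cauchy property; the full sequence then converges because a Cauchy sequence with a convergent subsequence converges to the same limit.

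The main obstacle is this final upgrade from weak convergence to $H$-convergence, since Hutchinson test functions are unbounded; both the existence of the weak limit (via Prokhorov) and its admissibility in $\mathcal{M}(X)$ rely on completeness of $X$, which is exactly what must be available for the sufficiency direction to succeed.
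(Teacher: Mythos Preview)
The paper does not contain a proof of this theorem: it is stated as a result of Kravchenko and attributed to \cite{K}, with no argument reproduced. There is therefore nothing in the paper to compare your proposal against.

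On its own merits, your sketch follows the standard route for completeness of the Wasserstein-$1$ (Kantorovich--Rubinshte\u{\i}n) space over a Polish base, which is what $(\mathcal{M}(X),H)$ is. The necessity direction via the isometric embedding $x\mapsto\delta_x$ and the zero-variance argument is clean and correct. For sufficiency, your outline (uniform first-moment bound $\Rightarrow$ tightness $\Rightarrow$ Prokhorov $\Rightarrow$ weak subsequential limit $\Rightarrow$ upgrade to $H$-convergence via uniform integrability of $d(\cdot,x_0)$) is the right architecture. The one place to be explicit is the uniform integrability step: the cleanest way to extract it from the Cauchy hypothesis is to test against the $1$-Lipschitz functions $f_R(x)=(d(x,x_0)-R)_+$, noting that $\int_{\{d>R\}} d(\cdot,x_0)\,d\mu_n \le 2\int f_{R/2}\,d\mu_n$ and that $\int f_{R/2}\,d\mu_n$ is within $H(\mu_n,\mu_{n_0})$ of $\int f_{R/2}\,d\mu_{n_0}$, which tends to $0$ with $R$ for the fixed measure $\mu_{n_0}$. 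With that detail filled in, the argument is complete.
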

\begin{remark}
We remark that Hutchinson used a different measure space in his paper and proved his result on the existence of a measure-theoretical attractor for a contractive IFS using Banach's fixed point theorem. Nevertheless, in \cite{K}, 
Kravchenco proved that the space considered by Hutchinson {is} not complete. Kravchenco defined the space $\SM(X)$ as above and proved the completeness of this space. Then, he proved that Hutchinson's arguments work with this space. 
\end{remark}

\begin{remark}
The above result also works in complete but non separable spaces, provided that we restrict ourselves to measures with separable support. See \cite{K} for more details. 

\end{remark}

Now, let us recall the weak$^{*}$ topology:

Let us denote by $C_{b}(X)$ the set of bounded and continuous functions $f:X\to \R$.
Given $\eps>0$, $\nu \in \mathcal{M}(X)$ and $f_{1},...,f_{k}\in C_{b}(X)$ we define: 
$$V(\nu,\eps,k):=\{\mu \in \mathcal{M}(X):|\mu(f_{j})-\nu(f_{j})|<\eps, j=1,...,k\}.$$
The \emph{weak$^*$ topology} is the topology generated by the basis $V(\nu,\eps,k)$ for each $\eps,k,\nu$. Furthermore, we have that $\mu_{n}$ converges for $\mu$ in the weak$^{*}$ topology if and only if
$\mu_{k}(f)\longrightarrow \mu(f)$ for every $f\in C_{b}(X)$. The relation between the weak$^*$ topology and the Hutchinson topology is given by next theorem. A proof can be found in \cite{K}.

\begin{theorem}
The Hutchinson topology and the weak$^{*}$ topology are equivalent if and only if $Diam(X)<+\infty$. Furthermore, if $Diam(X)=\infty$ then the Hutchinson topology is finer than weak$^*$ topology.
\label{fracaversushutchinson}
\end{theorem}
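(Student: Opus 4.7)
The plan is to split the theorem into three statements to be proved separately: (i) the Hutchinson topology is always finer than the weak$^{*}$ topology, regardless of the diameter; (ii) if $\operatorname{Diam}(X)<+\infty$ then weak$^{*}$ convergence implies Hutchinson convergence; and (iii) if $\operatorname{Diam}(X)=+\infty$ then some sequence in $\mathcal{M}(X)$ converges in the weak$^{*}$ topology but not in the Hutchinson metric. Together (i)--(iii) give both assertions of the theorem.

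For (i), I would invoke the Portmanteau-type characterization (valid on any metric space) that weak$^{*}$ convergence of probability measures is equivalent to convergence of integrals against all bounded Lipschitz test functions. Given $H(\mu_{n},\mu)\to 0$ and a bounded Lipschitz $f$ with Lipschitz constant $L>0$, one has $f/L\in\lip$, hence $|\mu_{n}(f)-\mu(f)|\leq L\cdot H(\mu_{n},\mu)\to 0$. This shows $H$-convergence implies weak$^{*}$ convergence, i.e.\ the identity $(\mathcal{M}(X),H)\to(\mathcal{M}(X),w^{*})$ is continuous, which is exactly the statement that $H$ is finer.

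For (ii), fix $x_{0}\in X$; since $\mu_{n}(f)-\mu(f)$ is unchanged when $f$ is replaced by $f-f(x_{0})$, it suffices to bound $\sup\{|\mu_{n}(g)-\mu(g)|:g\in\mathcal{F}_{0}\}$, where $\mathcal{F}_{0}:=\{g\in\lip:g(x_{0})=0\}$. Each such $g$ satisfies $|g|\leq\operatorname{Diam}(X)$ pointwise. Given $\eps>0$ and a weakly convergent $\mu_{n}\to\mu$, Prohorov's theorem yields a compact $K\subset X$ with $\sup_{n}\mu_{n}(X\setminus K)<\eps/(4\operatorname{Diam}(X))$ and $\mu(X\setminus K)<\eps/(4\operatorname{Diam}(X))$. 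The restricted family $\mathcal{F}_{0}|_{K}$ is uniformly bounded and equicontinuous, so Arzelà--Ascoli furnishes a finite $\eps/4$-net $g_{1}|_{K},\ldots,g_{m}|_{K}$ in $C(K)$; using McShane's extension theorem I may assume $g_{i}\in\lip$ on the whole of $X$ with $|g_{i}|\leq\operatorname{Diam}(X)$. For any $g\in\mathcal{F}_{0}$, pick $i$ with $\sup_{K}|g-g_{i}|<\eps/4$ and split integrals over $K$ and $X\setminus K$; this bounds $|\mu_{n}(g)-\mu(g)|$ by a universal multiple of $\eps$, uniformly in $g$, as soon as $n$ is large enough that $|\mu_{n}(g_{i})-\mu(g_{i})|<\eps/4$ for each $i$ (which holds by weak$^{*}$ convergence since $m$ is finite). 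Taking the supremum over $\mathcal{F}_{0}$ gives $H(\mu_{n},\mu)\to 0$.

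For (iii), with $\operatorname{Diam}(X)=+\infty$, I build an explicit counterexample. Pick $x_{0}\in X$ and points $x_{n}\in X$ with $d(x_{0},x_{n})\geq n^{2}$, and set $\mu_{n}:=(1-1/n)\delta_{x_{0}}+(1/n)\delta_{x_{n}}$, $\mu:=\delta_{x_{0}}$; both belong to $\mathcal{M}(X)$. A direct computation shows $\mu_{n}(f)\to\mu(f)$ for every $f\in C_{b}(X)$, so $\mu_{n}\to\mu$ in the weak$^{*}$ topology. On the other hand, testing against the $1$-Lipschitz function $f(x):=d(x_{0},x)$ gives $\mu_{n}(f)=d(x_{0},x_{n})/n\geq n$, hence $H(\mu_{n},\mu)\geq n\to+\infty$. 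Combined with (i), this shows that $H$ is strictly finer than $w^{*}$ in the infinite-diameter case.

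The main obstacle is step (ii): $X$ need not be compact, so the normalized family $\mathcal{F}_{0}$ is not \emph{a priori} relatively compact in $C_{b}(X)$. The key is to combine Prohorov tightness with Arzelà--Ascoli on a well-chosen compact subset, together with a controlled Lipschitz extension, in order to reduce the supremum over the infinite family $\lip$ to weak$^{*}$ convergence against finitely many explicit test functions.
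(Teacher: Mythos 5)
Your proposal cannot be compared line by line with an argument in the paper, because the paper does not prove Theorem \ref{fracaversushutchinson}: the statement is imported from Kravchenko \cite{K}, with only the remark that a proof can be found there. Judged on its own, your three-step argument is correct and self-contained. Step (i) is the standard Portmanteau reduction to bounded Lipschitz test functions and is valid on any metric space. Step (iii) is a clean mass-escaping-to-infinity counterexample; note that $\operatorname{Diam}(X)=\infty$ does yield points $x_n$ with $d(x_0,x_n)\geq n^2$ by the triangle inequality, and the measures involved are finitely supported, hence lie in $\mathcal{M}(X)$. The only step with real content is (ii), and your combination of Prohorov tightness (available because the standing hypothesis for $\mathcal{M}(X)$ is that $X$ is complete and separable) with Arzel\`a--Ascoli on the tight compact set, followed by the split of the integrals over $K$ and $X\setminus K$, is sound and gives the required uniformity over $\mathcal{F}_0$. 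Two small simplifications are possible: the finite $\eps/4$-net can be chosen from $\mathcal{F}_0$ itself, since a totally bounded family admits a net among its own members, so no Lipschitz extension theorem is needed; and the passage from sequential statements to the topological conclusion (``equivalent'', ``finer'') should be justified by observing that both topologies are metrizable on $\mathcal{M}(X)$ for separable $X$ --- the Hutchinson topology by definition, the weak$^*$ topology for instance by the bounded-Lipschitz metric --- so convergent sequences determine both topologies.
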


When $(X,d)$ is a compact metric space, we have the following result on the metrizability of $\mathcal{M}(X).$ The proof can be found in \cite{W}.

\begin{theorem}
If $X$ is a compact metric space and $\{f_{n}\}_{n\in \N}$ is a dense set in the unit sphere of $C(X)$ with the uniform metric, then the function:
$$D(\nu,\mu):=\sum_{n=1}^{\infty}\frac{1}{2^{n}}|\int_{X} f_{n}d\nu - \int_{X} f_{n}d\mu|$$
is a metric in $\mathcal{M}(X)$ generating the weak$^*$ topology.
\label{metricacompacta}
\end{theorem}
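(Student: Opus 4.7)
The plan is to verify first that $D$ satisfies the metric axioms, and then to show that the identity map from $\mathcal{M}(X)$ with the weak$^*$ topology to $\mathcal{M}(X)$ with the $D$-topology is a homeomorphism. Throughout I will use that $X$ is compact, so every $\mu\in\mathcal{M}(X)$ is a Borel probability and every $f\in C(X)$ is bounded.

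To see that $D$ is well defined and finite, note that each $f_n$ lies in the unit sphere of $C(X)$ with the uniform norm, so $|\int f_n\,d\nu - \int f_n\,d\mu|\le 2$; hence the series is dominated by $\sum 2^{1-n}=2$. Symmetry, non-negativity and the triangle inequality follow term by term from the corresponding properties of $|\cdot|$ in $\R$. The only delicate point is that $D(\nu,\mu)=0$ implies $\nu=\mu$: from $D(\nu,\mu)=0$ one gets $\int f_n\,d\nu=\int f_n\,d\mu$ for every $n$. Given an arbitrary $g\in C(X)$, write $g=\|g\|_\infty\, h$ with $h$ in the unit sphere (if $g\ne 0$), pick $f_{n_k}\to h$ uniformly and use that $|\int f_{n_k}\,d\mu-\int h\,d\mu|\le \|f_{n_k}-h\|_\infty$ (since $\mu$ is a probability) to conclude that $\int g\,d\nu=\int g\,d\mu$ for every $g\in C(X)$. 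By the Riesz representation theorem on the compact space $X$, this forces $\nu=\mu$.

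Now I verify equivalence of the two topologies. For the direction weak$^*$ $\Rightarrow$ $D$: assume $\mu_k\to\mu$ in weak$^*$. Each term $a_n(k):=2^{-n}|\int f_n\,d\mu_k-\int f_n\,d\mu|$ tends to $0$ as $k\to\infty$ because $f_n\in C_b(X)$, while $a_n(k)\le 2^{1-n}$ uniformly in $k$; the dominated convergence theorem applied to the counting measure on $\N$ yields $D(\mu_k,\mu)\to 0$. For the converse direction, assume $D(\mu_k,\mu)\to 0$. Then for every fixed $n$ one has $\int f_n\,d\mu_k\to \int f_n\,d\mu$. Given $g\in C(X)$ and $\eps>0$, choose $f_{n_0}$ with $\|g-\|g\|_\infty f_{n_0}\|_\infty<\eps$ (again using density of the $f_n$'s in the unit sphere together with scaling); then for all $k$,
\[
\left|\int g\,d\mu_k-\int g\,d\mu\right|\le 2\eps+\|g\|_\infty\left|\int f_{n_0}\,d\mu_k-\int f_{n_0}\,d\mu\right|,
\]
which is less than $3\eps$ for $k$ large. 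Hence $\mu_k\to\mu$ in the weak$^*$ sense, as required.

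The step requiring the most care is the density argument used to pass from $\{f_n\}$ to arbitrary continuous functions in both the identification of measures with $D(\nu,\mu)=0$ and in the proof that $D$-convergence implies weak$^*$ convergence; compactness of $X$ (which makes every continuous function bounded so that the scaling $g=\|g\|_\infty h$ is valid, and every Borel probability a Radon measure for Riesz) is used essentially in both places.
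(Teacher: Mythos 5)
The paper does not actually prove this theorem: it states it and refers the reader to Walters \cite{W}, so there is no internal proof to compare against. Your argument is, in substance, the standard proof that Walters gives: well-definedness and the metric axioms termwise, separation of points via density of $\{f_n\}$ in the unit sphere plus scaling plus the Riesz representation theorem, and the two convergence implications via truncation/approximation. All of those steps are correct as written; in particular the estimate $|\int f_{n_k}\,d\mu-\int h\,d\mu|\le\|f_{n_k}-h\|_\infty$ and the $2\eps+\|g\|_\infty|\cdots|$ bound are exactly what is needed.

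The one point you should tighten is the passage from sequential convergence to equality of topologies. Your argument for ``weak$^*$ convergence implies $D$-convergence'' establishes sequential continuity of the identity from $(\mathcal{M}(X),\mathrm{weak}^*)$ to $(\mathcal{M}(X),D)$, but sequential continuity upgrades to continuity only when the domain is first countable. That does hold here (for compact metric $X$ the space $C(X)$ is separable, so the weak$^*$ topology on $\mathcal{M}(X)$ has a countable neighborhood base at each point), but it should be said; the direction from the metric topology to the weak$^*$ topology is unproblematic since the domain there is metric. A cleaner fix is to avoid sequences in that direction altogether: given $\eps>0$, choose $N$ with $\sum_{n>N}2^{1-n}<\eps/2$ and observe that the basic weak$^*$ neighborhood $V(\mu,\eps/2,N)$ built from $f_1,\dots,f_N$ is contained in the $D$-ball of radius $\eps$ about $\mu$, which shows directly that every $D$-open set is weak$^*$-open. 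With that remark added, your proof is complete.
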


Under the measure-theoretical point of view we also have a notion of attractor. To explain this notion, we shall define the \emph{transfer operator}:

\begin{definition}
Let $p$ be a probability {measure} in $\Lambda$. We define the \emph{Transfer Operator} $T_p:\SM(X)\to\SM(X)$ by the formula: 
$$T_p(\mu)(B):=\int_{\Lambda}\mu(w_{\lambda}^{-1}(B))dp(\lambda),$$
for every Borel set $B$ and for each measure $\mu \in \mathcal{M}(X)$. If a measure $\mu \in \mathcal{M}(X)$ is a fixed point of the transfer operator we say that $\mu$ is an \emph{invariant measure} for $w$.
\end{definition}

\begin{remark}
Sometimes we will omit the set $B$ in the definition and write:
$$T_p(\mu):=\int_{\Lambda}w_{\lambda}^{*}(\mu)dp(\lambda).$$
\end{remark}
where $*$ is the push-forward operator.

\begin{definition}
We say that a probability  $\nu \in \mathcal{M}(X)$ is a \emph{measure-theoretical attractor} for $w$ if $T_p^{n}(\mu)\stackrel{n\to\infty}{\longrightarrow}\nu$ in the Hutchinson metric for all $\mu\in \mathcal{M}(X)$.
\end{definition}

Our result giving the existence of a unique measure-theoretical attractor in the is the following.

\begin{maintheorem}
\label{teoremaatratorglobalmetrico}
If $X$ is a compact metric space and $w$ is a weakly hyperbolic IFS, then $w$ has a measure-theoretical attractor $\nu\in \mathcal{M}(X)$ which is the unique fixed point of the transfer operator.
Furthermore, $p(U)>0$ for every open set $U\subset \Lambda$ then we have that $supp(\nu)=K$, where $K$ is the topological attractor given by Theorem~\ref{thm:2}.
\end{maintheorem}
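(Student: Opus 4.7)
The plan is to exhibit the invariant measure $\nu$ as the push-forward of the Bernoulli product measure $p^{\N}$ under the coding map of the IFS. First I would construct $\pi:\Om\to X$ by setting $\pi(\si):=\lim_n w_{\si_1\ldots\si_n}(x_0)$ for an arbitrary $x_0\in X$. The compositions satisfy $w_{\si_1\ldots\si_{n+1}}(X)\subset w_{\si_1\ldots\si_n}(X)$, and weak hyperbolicity forces the diameters of these nested images to shrink to $0$, so the limit exists and does not depend on $x_0$. To promote this construction to \emph{uniform} weak hyperbolicity, observe that $D_n(\si):=\diam(w_{\si_1\ldots\si_n}(X))$ is continuous on $\Om$ (by compactness of $X$ and continuity of $w$), decreasing in $n$, and pointwise convergent to $0$; since $\Om=\La^{\N}$ is compact, Dini's theorem yields uniform convergence $D_n\to 0$. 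Consequently $\pi$ is a uniform limit of the continuous maps $\si\mapsto w_{\si_1\ldots\si_n}(x_0)$, hence continuous.

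Next, set $\nu:=\pi_* p^{\N}$. A straightforward induction on $n$ gives
$$\int_X f\, dT_p^n(\mu)=\int_{\Om}\int_X f(w_{\si_1\ldots\si_n}(x))\, d\mu(x)\, dp^{\N}(\si)$$
for every $\mu\in\SM(X)$ and $f\in C(X)$. Since $w_{\si_1\ldots\si_n}(x)\to\pi(\si)$ uniformly in $(\si,x)$ and $f$ is bounded and continuous, the dominated convergence theorem delivers the limit $\int_\Om f(\pi(\si))\, dp^{\N}(\si)=\int_X f\, d\nu$. Hence $T_p^n(\mu)\to\nu$ in the weak$^*$ topology, and by Theorem \ref{fracaversushutchinson} (applicable because $\diam(X)<\infty$) also in the Hutchinson metric, giving the required attractor. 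Uniqueness of the fixed point is then immediate: any $T_p$-invariant $\mu^*$ satisfies $T_p^n(\mu^*)=\mu^*\to\nu$, forcing $\mu^*=\nu$.

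For the support statement, the hypothesis $p(U)>0$ on every open $U\subset\La$ propagates to the product measure, giving $\supp(p^{\N})=\Om$. Continuity of $\pi$ then yields $\supp(\nu)=\overline{\pi(\Om)}$. The fixed point of $w_{\si_1\ldots\si_n}$ equals $\pi$ applied to the periodic word $(\si_1,\ldots,\si_n,\si_1,\ldots,\si_n,\ldots)$, so Theorem \ref{thm:2} gives $K\subset\overline{\pi(\Om)}$. Conversely, invariance $\cF(K)=K$ inductively gives $w_{\si_1\ldots\si_n}(K)\subset K$, and passing to the limit (using closedness of $K$) yields $\pi(\si)\in K$ for every $\si\in\Om$. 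Therefore $\supp(\nu)=\overline{\pi(\Om)}=K$.

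The main obstacle is the first step: upgrading pointwise weak hyperbolicity to uniform weak hyperbolicity via Dini's theorem in order to make the coding map $\pi$ continuous. Once this is in hand, the rest of the argument is a routine combination of Fubini, dominated convergence, and the topological characterization of $K$ provided by Theorem \ref{thm:2}.
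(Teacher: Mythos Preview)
Your proof is correct and, in several places, more streamlined than the paper's. The overall architecture is the same---build the coding map $\pi=\Gamma$, push forward the Bernoulli measure, and read off everything from uniform convergence---but two steps differ in execution.

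First, to upgrade pointwise weak hyperbolicity to uniform weak hyperbolicity, the paper (Lemma~\ref{lemaprincipal}) argues by contradiction via a diagonal extraction in $\La$, while you invoke Dini's theorem on the monotone continuous family $D_n(\si)=\diam(w_{\si_1\ldots\si_n}(X))$ over the compact space $\Om$. Your route is shorter and conceptually cleaner; the paper's diagonal argument, on the other hand, is more hands-on and does not require the reader to recognize that $D_n$ factors through $\La^n$ continuously (that is the content of Lemma~\ref{continuidadeproduto}, which you are implicitly using).

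Second, the paper establishes the attractor in stages: it first shows $\{T_p^n(\de_a)\}$ is Cauchy (Lemma~\ref{convergenciadirac}), then that $T_p^n(\mu)$ has the same limit for every $\mu$ (Lemma~\ref{convergenciamedidaqualquer}), and only afterwards identifies the limit as $\Gamma^*\p$ (Lemma~\ref{medidaimagemproduto}). You collapse these three lemmas into a single step by \emph{defining} $\nu=\pi_*\p$ up front and passing to the limit in $\int f\,dT_p^n(\mu)$ directly via the uniform convergence $w_{\si_1\ldots\si_n}(x)\to\pi(\si)$. This is more economical; the paper's decomposition has the minor advantage of isolating the continuity of $T_p$ as a reusable fact.

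For the support, the paper simply quotes $K=\Gamma(\Om)$ from the proof of Theorem~\ref{thm:2} and uses $\supp(\nu)=\Gamma(\supp\p)$. Your detour through periodic words and invariance of $K$ reproves $\pi(\Om)=K$ but is not needed: since Theorem~\ref{thm:2} already defines $K$ as $\Gamma(\Om)$, you may cite that directly.
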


If $\nu$ is an invariant measure for an IFS $w$, then we can define the ergodicity of $\nu$. This notion is related with the ergodic theorem for an IFS. See \cite{CSS} for details.

\begin{definition}
Fix $p\in \mathcal{M}(\Lambda)$ and $\mathbb{P}=p^{{\natural}}$ {the product measure}. We say that an invariant measure ${\mu}$ for $w$ is ergodic if for every continuous function $f:X\to \R$, every $x\in X$ and $\p$-almost every $\si\in\Om$ we have:
$$\lim_{n\to\infty}\frac{1}{n}\sum_{j=0}^nf(w_{\sigma_j}\circ\dots\circ w_{\sigma_1}(x))=\int_X fd\mu.\footnote{We shall use the convention that $w_{\sigma_j}\circ\dots\circ w_{\sigma_1}(x)=x$, if $j=0$.}$$
\end{definition}

Our next result is about the ergodicity of the measure-theoretical attractor.

\begin{maintheorem}
\label{ergodicidadedamedida}
If $X$ is a compact metric space and $w$ is a weakly hyperbolic IFS, then its unique measure-theoretical attractor is ergodic.
\end{maintheorem}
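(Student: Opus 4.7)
The plan is to pass to the skew-product $F : \Om \times X \to \Om \times X$ defined by $F(\si, x) = (T\si, w_{\si_1}(x))$, where $T$ is the shift on $\Om$; note that $F^n(\si,x) = (T^n\si, w_{\si_n}\circ\dots\circ w_{\si_1}(x))$. The first thing to check is that $\p \times \nu$ is $F$-invariant: writing $d\p(\si) = dp(\si_1)\,d\p(T\si)$ and applying Fubini reduces the push-forward computation to $\int_{\Lambda} (w_\la)_*\nu\,dp(\la) = T_p\nu$, which equals $\nu$ by Theorem~\ref{teoremaatratorglobalmetrico}. Ergodicity of $\nu$ in the sense of the statement will then follow from Birkhoff's ergodic theorem applied to the observable $\psi(\si,x) := f(x)$, once one upgrades from $\nu$-almost every $x$ to every $x \in X$ via weak hyperbolicity.

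A key intermediate step is the uniform version of weak hyperbolicity: the decreasing sequence $d_n := \sup_{\si \in \Om}\diam(w_{\si_1\cdots\si_n}(X))$ tends to zero. If not, pick for each $n$ a witnessing finite word extended arbitrarily to some $\si^{(n)} \in \Om$; compactness of $\Om$ yields a subsequential limit $\si^*$, and continuity of $w$ combined with the monotonicity $\diam(w_{\la_1\cdots\la_{n+1}}(X)) \leq \diam(w_{\la_1\cdots\la_n}(X))$ would produce $\diam(w_{\si^*_1\cdots\si^*_m}(X)) \geq d > 0$ for every $m$, contradicting Definition~\ref{wh} applied to $\si^*$.

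The central step is proving that $(F, \p\times\nu)$ is ergodic. Let $\vfi \in L^2(\p\times\nu)$ be $F$-invariant and approximate it in $L^2$ by continuous functions $\vfi_k$ on $\Om\times X$ (which are uniformly continuous by compactness). Setting
$$g_n(\si,x,y) := |\vfi(T^n\si, w_{\si_n}\cdots w_{\si_1}(x)) - \vfi(T^n\si, w_{\si_n}\cdots w_{\si_1}(y))|$$
and defining $g_n^{(k)}$ analogously with $\vfi_k$, $F$-invariance of $\vfi$ yields $g_n(\si,x,y) = |\vfi(\si,x) - \vfi(\si,y)|$ for $\p\times\nu\times\nu$-almost every $(\si,x,y)$. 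Since $w_{\si_n}\cdots w_{\si_1}(X)$ is the image of $X$ under the reversed finite word $(\si_n,\dots,\si_1)$, its diameter is at most $d_n \to 0$, forcing $g_n^{(k)} \to 0$ uniformly. Using $F$-invariance of $\p\times\nu$, the estimate $\|g_n - g_n^{(k)}\|_{L^2(\p\times\nu\times\nu)} \leq 2\|\vfi - \vfi_k\|_{L^2(\p\times\nu)}$ holds, and letting $n \to \infty$ and then $k \to \infty$ shows $\vfi(\si,x) = \vfi(\si,y)$ almost everywhere. Hence $\vfi$ depends only on $\si$, so $F$-invariance reduces to $T$-invariance; since the shift on $(\Om,\p)$ is Bernoulli, and therefore ergodic, $\vfi$ must be almost everywhere constant.

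With $(F,\p\times\nu)$ ergodic, Birkhoff's theorem applied to $\psi(\si,x) = f(x)$ with $f \in C(X)$ gives, for $\p\times\nu$-almost every $(\si,x)$,
$$\lim_{n\to\infty}\frac{1}{n+1}\sum_{j=0}^n f(w_{\si_j}\circ\dots\circ w_{\si_1}(x)) = \int_X f\,d\nu.$$
Fubini then produces a $\p$-full-measure set of $\si$ for which there exists at least one $x_0 = x_0(\si) \in X$ realizing this convergence; for any other $x \in X$ the uniform estimate $d(w_{\si_j}\cdots w_{\si_1}(x), w_{\si_j}\cdots w_{\si_1}(x_0)) \leq d_j \to 0$, combined with uniform continuity of $f$ on $X$, forces the two Birkhoff averages to share the same limit $\int_X f\,d\nu$. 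The main obstacle I anticipate is the ergodicity of the skew-product: unlike in standard hyperbolic settings the fibre maps $w_\la$ need not be individually contracting, so it is essential to first convert the pointwise weak hyperbolicity into the uniform bound $d_n \to 0$ and then use this to make the fibres effectively contract along typical orbits.
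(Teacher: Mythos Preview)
Your proof is correct and shares the same skeleton as the paper's: pass to the skew product $F(\si,x)=(\beta(\si),w_{\si_1}(x))$, verify that $\p\times\nu$ is $F$-invariant, invoke the uniform bound $d_n=\sup_{\si}\diam(w_{\si_1\cdots\si_n}(X))\to 0$ (the paper's Lemma~\ref{lemaprincipal}), and at the end use Fubini plus weak hyperbolicity to upgrade from $\p\times\nu$-a.e.\ $(\si,x)$ to every $x$.

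The genuine difference lies in the central step. You prove outright that $(F,\p\times\nu)$ is ergodic: for an $F$-invariant $\vfi\in L^2$ you use the contraction $d_n\to 0$ along fibres, together with a continuous approximation, to force $\vfi(\si,x)=\vfi(\si,y)$ a.e., reducing to ergodicity of the Bernoulli shift. The paper never establishes ergodicity of the skew product. Instead it applies the non-ergodic Birkhoff theorem to $f'(\si,x)=f(x)$, obtains a limit $f^*(\si,x)$, shows by the same fibre-contraction estimate that $f^*$ is independent of $x$, and then checks by hand the identity $f^*(\beta(\si),x)=f^*(\si,x)$ (comparing the Birkhoff sum for $\si$ with that for $\beta(\si)$ started at $y=w_{\si_1}(x)$); ergodicity of the shift then forces $f^*$ to be constant, and the Birkhoff relation $\int f^*\,d(\p\times\nu)=\int f'\,d(\p\times\nu)=\int f\,d\nu$ identifies the constant.

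Your route yields the stronger intermediate statement (ergodicity of the skew product, valid for all $L^2$ observables, not just those pulled back from $X$) at the cost of an $L^2$-approximation argument; the paper's route is more elementary, working only with the specific continuous observable $f'$ and avoiding any density argument, but it has to verify the shift-invariance of $f^*$ directly. Both reductions ultimately rest on the same two facts: $d_n\to 0$ and the ergodicity of $(\beta,\p)$.
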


\subsection{The complete case}

We propose the following definition as an extension of the concept of weakly hyperbolic IFS.

\begin{definition}
\label{$w^{*}$h}
Let $w:\Lambda\times X\to X$ be a continuous IFS, where $(X,d)$ is a complete metric space. We say that $w$ is \emph{weakly$^*$ hyperbolic}  if for all $x,y\in X$ and $\sigma\in \Om$ we have:
$$\lim_{n\to +\infty}d(w_{\sigma_{1}...\sigma_{n}}(x),w_{\sigma_{1}...\sigma_{n}}(y))=,0$$
where the convergence is assumed to be uniform in $\Om$ and locally uniform in $X$. In other words, there exists $\eta>0$ such that for all $\eps>0$ there exists $n_{0}=n_{0}(\eps)$ such that if $n\geq n_{0}$ then
$$d(w_{\sigma_{1}...\sigma_{n}}(x),w_{\sigma_{1}...\sigma_{n}}(y))<\eps,$$ for all $\sigma\in\Om$ and $x,y$ such that $d(x,y)<\eta.$
\end{definition}
In Section~\ref{completo} {we will prove} that if $X$ is compact, then an IFS $w$ is weakly$^*$ hyperbolic if and only if $w$ is weakly hyperbolic. 
We state here results in the complete case. 

Our result concerning the existence of a topological global attractor in the complete case is the following.

\begin{maintheorem}
\label{thm:1}
Let $w$ be a weakly hyperbolic IFS on a complete metric space $X$ and with a compact parameter space $\Lambda$. Assume that $(\mathcal{K}(X),d_H)$ is $\eps$-chainable for every $\eps>0$. 
Then $\mathcal{F}$ has an attractor $K$ that is also a compact invariant set.
\end{maintheorem}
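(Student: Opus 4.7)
The plan would mirror the compact case (Theorem~\ref{thm:2}) but replace compactness of $X$ by the chainability argument. The Hausdorff space $(\mathcal{K}(X),d_H)$ is complete under the hypothesis on $X$, so the attractor will be built as a limit of iterates of $\mathcal{F}$ in this space.

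\textbf{Step 1 (asymptotic contraction).} I would first establish that for every pair $A,B\in\mathcal{K}(X)$, one has $d_H(\mathcal{F}^n(A),\mathcal{F}^n(B))\to 0$ as $n\to\infty$. Given $\delta>0$, I would use the $\eta$-chainability of $(\mathcal{K}(X),d_H)$, where $\eta$ is the constant from the definition of weak$^*$ hyperbolicity, to pick a chain $A=A_0,A_1,\ldots,A_m=B$ in $\mathcal{K}(X)$ with $d_H(A_i,A_{i+1})<\eta$. For each $i$ and each $x\in A_i$ there is $y\in A_{i+1}$ with $d(x,y)<\eta$; by weak$^*$ hyperbolicity, for $n\geq n_0(\delta/m)$ one has $d(w_{\sigma_1\ldots\sigma_n}(x),w_{\sigma_1\ldots\sigma_n}(y))<\delta/m$ uniformly in $\sigma$, yielding $d_H(\mathcal{F}^n(A_i),\mathcal{F}^n(A_{i+1}))\leq\delta/m$. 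Summing gives $d_H(\mathcal{F}^n(A),\mathcal{F}^n(B))\leq\delta$.

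\textbf{Step 2 (producing a convergent orbit).} I would fix some $B_0\in\mathcal{K}(X)$ and aim to show $\{\mathcal{F}^n(B_0)\}$ is Cauchy in $(\mathcal{K}(X),d_H)$. A direct application of Step~1 to the pair $(B_0,\mathcal{F}^k(B_0))$ gives $d_H(\mathcal{F}^n(B_0),\mathcal{F}^{n+k}(B_0))\to 0$ as $n\to\infty$ for each fixed $k$, but not uniformly in $k$, since the chain from $B_0$ to $\mathcal{F}^k(B_0)$ may have length growing with $k$. To circumvent this I would try to find a positively $\mathcal{F}$-invariant compact set $B_0$, i.e.\ $\mathcal{F}(B_0)\subset B_0$: then $\{\mathcal{F}^n(B_0)\}$ is a nested decreasing sequence of compact subsets of $B_0$, whose intersection $K$ is nonempty and compact, and $\mathcal{F}^n(B_0)\to K$ in $d_H$. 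Alternatively, one exploits the asymptotic regularity $d_H(\mathcal{F}^n(B_0),\mathcal{F}^{n+1}(B_0))\to 0$ (itself a consequence of Step~1) together with pre-compactness of the orbit to invoke an Edelstein-type fixed-point theorem on the complete $\eps$-chainable space $(\mathcal{K}(X),d_H)$.

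\textbf{Step 3 (invariance and global attraction).} Once $\mathcal{F}^n(B_0)\to K$ is in hand, continuity of $\mathcal{F}$ on $\mathcal{K}(X)$ gives $\mathcal{F}(K)=\lim \mathcal{F}^{n+1}(B_0)=K$. For any $B\in\mathcal{K}(X)$, Step~1 yields $d_H(\mathcal{F}^n(B),\mathcal{F}^n(B_0))\to 0$, and hence $\mathcal{F}^n(B)\to K$, so $K$ is the global attractor. The main obstacle is Step~2: extracting a fixed point of $\mathcal{F}$ from the asymptotic contraction of Step~1 in the non-compact setting, which is precisely what the $\eps$-chainability hypothesis is intended to enable.
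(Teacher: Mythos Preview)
Your Step~1 and Step~3 are correct and match the paper's argument almost verbatim: the paper also shows that weak$^*$ hyperbolicity gives $d_H(\mathcal{F}^n(A),\mathcal{F}^n(B))<\eps$ for $n\ge n_0(\eps)$ whenever $d_H(A,B)<\eta$, and then uses an $\eta$-chain in $\mathcal{K}(X)$ to pass to arbitrary pairs. Your Step~3 is likewise identical to the paper's conclusion via continuity of $\mathcal{F}$ (Lemma~\ref{continuidade}).

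The gap is exactly where you locate it, in Step~2, but the paper resolves it in one line rather than by either of your proposed routes. What your Step~1 actually establishes is that $\mathcal{F}$ is a \emph{continuous asymptotic contraction} on the complete metric space $(\mathcal{K}(X),d_H)$ in the precise sense of Definition~\ref{contracaoassintotica}: convergence $d_H(\mathcal{F}^n(A),\mathcal{F}^n(B))\to 0$ for every pair, uniform when $d_H(A,B)\le\eta$. This is exactly the hypothesis of Jachymski's fixed point theorem (Theorem~\ref{Jachymski}), already stated in the paper, which immediately yields a point $K\in\mathcal{K}(X)$ with $\mathcal{F}^n(B)\to K$ for all $B$. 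No separate Cauchy argument, no forward-invariant compact set, and no pre-compactness of an orbit is needed. Your two alternatives in Step~2 are both problematic: producing a forward-invariant compact $B_0$ in a non-compact $X$ is not obvious and you give no construction, and invoking ``pre-compactness of the orbit'' for an Edelstein-type argument begs the question, since nothing so far forces $\{\mathcal{F}^n(B_0)\}$ to lie in a compact subset of $\mathcal{K}(X)$. The clean fix is simply to recognise that Step~1 already verifies the hypotheses of Theorem~\ref{Jachymski} and cite it.
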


For the definition of an $\eps$-chainable metric space, we refer the reader to Section~\ref{completo}. However, we remark that this theorem can be applied when $X$ is a Banach space or a complete Riemannian manifold.

Regarding the existence of attractors from the measure-theoretical viewpoint, we have the following result:

\begin{maintheorem}
\label{medidainvariantecompleto}
Let $(X,d)$ be a complete metric space, uniformly $\eps$-chainable on balls and with $(\SK(X),d_H)$ $\eps$-chainable, for every $\eps>0$. If $w$ is a weakly hyperbolic IFS, then there exists a unique invariant measure $\nu\in \mathcal{M}(X)$ 
such that $supp(\nu)\subset K$ and in fact we get that $supp(\nu)=K$ if $p(U)>0$ for each $U\subset \Lambda$ open, where $K$ is the attractor given by Theorem \ref{thm:1}. Furthermore, if $\mu\in\mathcal{M}(X)$ has compact support then $T_p^{n}(\mu)\stackrel{n}{\longrightarrow}\nu$ in the Hutchinson metric.
\end{maintheorem}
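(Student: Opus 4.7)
The plan is to reduce the measure-theoretic question to the compact-space setting of Theorem \ref{teoremaatratorglobalmetrico}. By Theorem \ref{thm:1} the hypotheses yield a compact attractor $K$ with $\cF(K)=K$; in particular $w_\la(K)\subset K$ for every $\la\in\La$, so $w$ restricts to a weakly hyperbolic IFS on the compact metric space $K$ (the two definitions of weak hyperbolicity coincide on compact spaces, as is shown in Section \ref{completo}). Applying Theorem \ref{teoremaatratorglobalmetrico} to this restriction gives a unique invariant probability $\nu\in\cM(K)$, which I view inside $\cM(X)$ by trivial extension. Since the transfer operator acts identically on measures supported in $K$, the measure $\nu$ is $T_p$-invariant in $\cM(X)$, and uniqueness among invariant measures with support in $K$ is just the uniqueness clause of Theorem \ref{teoremaatratorglobalmetrico}; likewise the identity $\supp(\nu)=K$ under the assumption $p(U)>0$ for every open $U\subset\La$ is transferred directly.

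For the convergence statement, fix $\mu\in\cM(X)$ with compact support $C$. Since $T_p^n(\mu)$ is an integral of push-forwards along $n$-fold compositions of the maps of the IFS, $\supp(T_p^n(\mu))\subset \cF^n(C)$. I introduce the set
\[
K^* := K\cup \bigcup_{n\geq 0}\cF^n(C).
\]
Because $K$ is a global attractor, $\cF^n(C)\to K$ in $(\cK(X),d_H)$. A standard sequential compactness argument then shows that $K^*$ is compact in $X$: a sequence in $K^*$ either visits $K\cup\bigcup_{n\leq N}\cF^n(C)$ infinitely often for some $N$, in which case one uses compactness of this finite union, or else it contains points $x_k\in \cF^{n_k}(C)$ with $n_k\to\infty$, which by Hausdorff convergence are close to points of $K$ having a convergent subsequence. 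Since $\cF(K^*)\subset K^*$, the IFS $w$ restricts to a weakly hyperbolic IFS on the compact space $K^*$, whose topological attractor equals $K$.

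Applying Theorem \ref{teoremaatratorglobalmetrico} to $w|_{K^*}$, and noting that $\mu\in\cM(K^*)$, I obtain $T_p^n(\mu)\to\nu^*$ in the Hutchinson metric on $\cM(K^*)$, where $\nu^*$ is the unique invariant measure for $w|_{K^*}$. Since $\supp(\nu^*)\subset K$, the uniqueness already established forces $\nu^*=\nu$. To transfer the convergence from $\cM(K^*)$ back to $\cM(X)$, I use that any $1$-Lipschitz function on $K^*$ extends to a $1$-Lipschitz function on $X$ by McShane's extension theorem, so for measures supported in $K^*$ the Hutchinson distances computed in $\cM(X)$ and in $\cM(K^*)$ coincide.

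The main obstacle I anticipate is in constructing and verifying compactness of the auxiliary set $K^*$ and ensuring that the restricted IFS lands within the hypotheses of Theorem \ref{teoremaatratorglobalmetrico}; the chainability hypotheses on $X$ and on $(\cK(X),d_H)$ will already have done their work in delivering $K$ via Theorem \ref{thm:1}, and the rest of the proof reduces to careful bookkeeping of the metrics on measure spaces.
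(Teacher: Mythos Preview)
Your argument is correct and takes a genuinely different route from the paper. For the first part (existence and uniqueness of $\nu$ supported in $K$, and $\supp(\nu)=K$ when $p$ is positive on open sets) the paper does exactly what you do: restrict the IFS to the compact invariant set $K$ and invoke the compact-space theory of Theorem~\ref{teoremaatratorglobalmetrico}. For the convergence of $T_p^n(\mu)$ with $\mu$ compactly supported, however, the paper proceeds by a direct Hutchinson-metric estimate: fixing $a\in K$, it bounds $H(T_p^n(\mu),T_p^n(\de_a))$ by $\int_X \xi_n\,d\mu$ with $\xi_n(x)=\int_{\La^n} d(\Ga(\si,n,x),\Ga(\si,n,a))\,dp^n$, and then uses the \emph{uniformly $\eps$-chainable on balls} hypothesis to connect every $x\in B(a,r)\supset\supp(\mu)$ to $a$ by an $\eta$-chain of bounded length $k=k(a,r,\eta)$, so that the locally uniform weak hyperbolicity forces $\xi_n\to 0$ uniformly on $B(a,r)$. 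Your compactification $K^*=K\cup\bigcup_{n\geq 0}\cF^n(C)$ sidesteps this entirely by reducing once more to Theorem~\ref{teoremaatratorglobalmetrico} on a compact phase space, and your McShane extension argument correctly identifies the Hutchinson distances on $\cM(K^*)$ and $\cM(X)$ for measures supported in $K^*$. The upshot is that your proof never touches the ``uniformly $\eps$-chainable on balls'' assumption, so you have in fact established a slightly stronger statement than the one in the paper; the trade-off is the auxiliary construction of $K^*$ and the appeal to the Lipschitz extension theorem, whereas the paper's argument is more self-contained and makes explicit where each chainability hypothesis is consumed.
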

 
\subsection{Drawing the attractor}

An \emph{orbit of the IFS} starting at some point $x$ is a sequence $\{x_k\}_{k=0}^{\infty}$ such that $x_0=x$, $x_{k+1}=w_{\si_k}(x_k)$, for some $\si=\{\si_k\}_{k=1}^{\infty}\in\Om$. If an IFS $w:\Lambda\times X\to X$ has an attractor $A$, we say that an orbit starting at $x$ \emph{draws} the attractor if the tails {of this orbit} are getting close, in the Hausdorff metric, to the attractor, i.e. if $$A=\lim_{k\to\infty}\{x_n\}_{n=k}^{\infty},\:\:\:\textrm{in the Hausdorff metric}.$$ 

{This concept is inspired by the so-called chaos game, studied in \cite{BV1} in the case of finite parameter space. Our last result says something about orbits of the IFS that draws the attractor. As in \cite{BV1}, it is not necessary to make any assumption of hyperbolicity
for the IFS, only the existence of a local attractor suffices. Nevertheless, in order to be able to prove a result for the case of arbitrary compact parameter space we needed to consider probability measures in the parameter space that possesses a 
uniform lower bound for the size of balls. We called such measures \emph{fair}. See Section~\ref{s.caos} for details.} 

\begin{maintheorem}
\label{caosgamecor}
Let $X$ be a proper complete metric space, $\Lambda$ be a compact metric space.
Consider $p\in\SM(\La)$ a fair probability measure, and $\p:=p^{\N}\in\SM(\Om)$.
Assume that $w:\Lambda\times X\to X$ is a continuous IFS. Then given $x\in X$, a $\p$-total probability set of orbits of $x$ draws the attractor $K$ of $w$.
\end{maintheorem}

\section{Proof of Theorem \ref{thm:2}}\label{topattractorcompacto}

{In \cite{E} Edalat proved the existence attractors for weakly hyperbolic IFS in the context of finite parameter space. His argumentes use concepts of
graph theory and are not available in our setting. Our strategy, instead, is to take advantage of the compactness of the phase space to show that $\diam(w_{\si_1...\si_n}(X))$ goes to zero \emph{uniformly} in $\Om$. We then use this fact combined with the more axiomatic approach of Mat\'e in \cite{M}
to show the existence of the attractor. To prove that this attractor is the closure of the fixed points of the partial maps $w_{\sigma_1...\sigma_n}$ we apply
a fixed point theorem of Jachymski \cite{J}.}

\begin{lemma}
For each $n\in \N$, the function $(\lambda_{1},...,\lambda_{n})\in\Lambda^{n}\mapsto\diam(w_{\lambda_{1}...\lambda_{n}}(X))\in\R$ is uniformly continuous with respect to the maximum metric.
\label{continuidadeproduto}
\end{lemma}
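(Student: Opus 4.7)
Since $\Lambda$ is compact, the product $\Lambda^n$ with the maximum metric is compact, so continuity of $\psi$ automatically upgrades to uniform continuity. Thus the plan is to establish continuity of $\psi$, which I would do by factoring it as
$$\Lambda^n \xrightarrow{\;\Phi_n\;} \mathcal{K}(X) \xrightarrow{\;\mathrm{Diam}\;} \mathbb{R},$$
where $\Phi_n(\lambda_1,\dots,\lambda_n) = w_{\lambda_1\dots\lambda_n}(X)$, and then checking that each factor is continuous.

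For the right-hand factor, I would verify the elementary fact that $\mathrm{Diam}:(\mathcal{K}(X),d_H)\to\mathbb{R}$ is $2$-Lipschitz: if $d_H(A,B)<\eta$ and $a_1,a_2\in A$, pick $b_1,b_2\in B$ with $d(a_i,b_i)<\eta$, so $d(a_1,a_2)\le d(b_1,b_2)+2\eta\le \mathrm{Diam}(B)+2\eta$, giving $\mathrm{Diam}(A)\le\mathrm{Diam}(B)+2\eta$ and symmetrically the other direction.

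For the left-hand factor, the key tool is that $w:\Lambda\times X\to X$ is continuous on the compact product $\Lambda\times X$, hence uniformly continuous. From this I extract two pieces of information, both uniform: (i) for every $\varepsilon>0$ there is $\delta>0$ with $d(\lambda,\lambda')<\delta\Rightarrow \sup_{x\in X}d(w_\lambda(x),w_{\lambda'}(x))<\varepsilon$, and (ii) for every $\varepsilon>0$ there is $\eta>0$ with $d(x,y)<\eta\Rightarrow \sup_{\lambda\in\Lambda}d(w_\lambda(x),w_\lambda(y))<\varepsilon$. Then I would prove by induction on $n$ that $(\lambda_1,\dots,\lambda_n)\mapsto w_{\lambda_1\dots\lambda_n}$ is continuous from $\Lambda^n$ into $C(X,X)$ equipped with the sup metric: writing
$$w_{\lambda_1\dots\lambda_n}(x)-w_{\lambda_1'\dots\lambda_n'}(x) \;=\; \bigl[w_{\lambda_1}(w_{\lambda_2\dots\lambda_n}(x))-w_{\lambda_1}(w_{\lambda_2'\dots\lambda_n'}(x))\bigr]+\bigl[w_{\lambda_1}-w_{\lambda_1'}\bigr](w_{\lambda_2'\dots\lambda_n'}(x)),$$
the first bracket is controlled by (ii) together with the inductive hypothesis, and the second bracket by (i). Continuity of $\Phi_n$ then follows because $f\mapsto f(X)$ is continuous from $(C(X,X),d_\infty)$ to $(\mathcal{K}(X),d_H)$: in fact $d_H(f(X),g(X))\le d_\infty(f,g)$.

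The main (and only) obstacle is keeping the estimates uniform in $x\in X$ as $n$ grows during the inductive step; this is resolved precisely because (ii) gives a modulus of continuity for $w_{\lambda_1}$ that is independent of $\lambda_1$, so the propagation of perturbations through the composition does not deteriorate. Once $\psi$ is shown continuous on the compact space $\Lambda^n$, uniform continuity follows from the Heine--Cantor theorem, concluding the proof.
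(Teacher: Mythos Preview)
Your proof is correct, but the paper takes a shorter route. Rather than inducting on $n$, the paper observes that $w^n:\Lambda^n\times X\to X$ is itself a continuous map on a compact space, hence uniformly continuous; this single application of Heine--Cantor immediately gives, for every $\eps>0$, a $\delta>0$ such that parameters within $\delta$ in the max metric force $d(w_{\lambda_1\dots\lambda_n}(x),w_{\lambda_1^*\dots\lambda_n^*}(x))<\eps$ for all $x\in X$. From this the inclusions $w_{\lambda_1\dots\lambda_n}(X)\subset B(w_{\lambda_1^*\dots\lambda_n^*}(X),\eps)$ (and symmetrically) follow, yielding the $2\eps$ diameter bound directly. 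This is essentially the same Hausdorff-distance and $2$-Lipschitz-diameter content that you isolate in your two factors, but obtained without the intermediate passage through $C(X,X)$ and without the induction on $n$. Your factorization is more modular and makes explicit that $\Phi_n$ lands continuously in $\mathcal{K}(X)$, which could be reused; the paper's argument is shorter for this particular lemma because it treats the $n$-fold composition as one uniformly continuous map rather than building it up layer by layer.
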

\begin{proof}
{Fix $\eps>0$}. Let us denote by $\rho$ the metric of $\Lambda$ and $d$ the metric of $X$. Let us define for $A\subset X$ and $t>0$:
$$B(A,t):=\{y\in X:d(y,A)\leq t\}$$
Since $w^{n}$ is uniformly continuous, there exists $\delta>0$ such that if $$\max\{\rho(\lambda_{1},\lambda_{1}^{*}),...,\rho(\lambda_{n},\lambda_{n}^{*}),d(x,y)\}<\delta$$
then 
$$d((w_{\lambda_{1}...\lambda_{n}})(x),(w_{\lambda_{1}^{*}...\lambda_{n}^{*}})(y))<\eps.$$

Take $(\lambda_{1},...,\lambda_{n})$ and $(\lambda_{1}^{*},...,\lambda_{n}^{*})$ in $\Lambda^{n}$ such that  $$\max\{\rho(\lambda_{1},\lambda_{1}^{*}),...,\rho(\lambda_{n},\lambda_{n}^{*}))\}<\delta.$$ We claim that:
\begin{enumerate}
\item $w_{\lambda_{1}...\lambda_{n}}(X)\subset B\left(w_{\lambda_{1}^{*}...\lambda_{n}^{*}}(X),\eps\right).$
\item $w_{\lambda_{1}^{*}...\lambda_{n}^{*}}(X)\subset B\left(w_{\lambda_{1}...\lambda_{n}}(X),\eps\right).$
\end{enumerate}

Indeed, if $y\in w_{\lambda_{1}...\lambda_{n}}(X)$ then we can write $y=w_{\lambda_{1}...\lambda_{n}}(x)$ where $x\in X$. Hence, if we define $y^{*}:=w_{\lambda_{1}^{*}...\lambda_{n}^{*}}(x)$, we have
$$d(y,y^{*})=d(w_{\lambda_{1}...\lambda_{n}}(x),w_{\lambda_{1}^{*}...\lambda_{n}^{*}}(x))<\eps.$$
This shows that $y\in B\left(w_{\lambda_{1}^{*}...\lambda_{n}^{*}}(X),\eps\right).$ The proof of (2) is similar. This finishes the lemma.
\end{proof} 

{The following is the key lemma of this section. We shall prove that $\diam(w_{\sigma_{1}...\sigma_{n}}(X))$ goes to zero 
uniformly with respect $\sigma\in\Omega$.}

\begin{lemma}\label{lemaprincipal}
Let $w$ be an IFS on a compact metric space $X$ with a compact parameter space. Then the following are  equivalent.
\begin{enumerate}
\item $w$ is weakly hyperbolic 
\item Given $\eps >0,$ there exists $n_{0}=n_{0}(\eps) \in\N$ such that for all 
$n\geq n_{0}$ and $\sigma \in \Om$ we have
$$\diam(w_{\sigma_{1}...\sigma_{n}}(X))<\eps$$
\end{enumerate}
\end{lemma}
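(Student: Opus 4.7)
The direction $(2)\Rightarrow(1)$ is immediate, since (2) implies in particular that the limit in Definition \ref{wh} holds for each fixed $\sigma\in\Omega$. The substance is to prove $(1)\Rightarrow(2)$. My plan is to argue by contradiction, using the compactness of $\Omega=\La^{\N}$ (which follows from Tychonoff's theorem, since $\Lambda$ is compact), the continuity lemma proved just above (Lemma \ref{continuidadeproduto}), and one simple monotonicity observation about the sets $w_{\si_1\ldots\si_n}(X)$.

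\medskip
\noindent\textbf{Key observation (monotonicity).} Because $w_{\si_{n+1}}(X)\subseteq X$, one has
\[
w_{\si_1\ldots\si_{n+1}}(X)\;=\;w_{\si_1\ldots\si_n}\!\bigl(w_{\si_{n+1}}(X)\bigr)\;\subseteq\; w_{\si_1\ldots\si_n}(X),
\]
so the sequence $n\mapsto\mathrm{Diam}(w_{\si_1\ldots\si_n}(X))$ is non-increasing in $n$ for every $\si\in\Omega$.

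\medskip
\noindent\textbf{Main argument.} Suppose $(2)$ fails. Then there exist $\eps>0$, a sequence $n_k\to\infty$, and points $\si^{(k)}\in\Omega$ with
\[
\mathrm{Diam}\bigl(w_{\si^{(k)}_1\ldots\si^{(k)}_{n_k}}(X)\bigr)\;\geq\;\eps\quad\text{for every }k.
\]
By compactness of $\Omega$, pass to a subsequence (not relabeled) such that $\si^{(k)}\to\si^*$ in the product topology. Apply weak hyperbolicity $(1)$ to $\si^*$ to obtain $N\in\N$ with
\[
\mathrm{Diam}\bigl(w_{\si^*_1\ldots\si^*_N}(X)\bigr)\;<\;\eps/2.
\]
Convergence in the product topology ensures that the first $N$ coordinates of $\si^{(k)}$ converge to those of $\si^*$ in the maximum metric on $\La^N$. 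Hence Lemma \ref{continuidadeproduto} applied to $\psi$ on $\La^N$ yields, for all $k$ sufficiently large,
\[
\mathrm{Diam}\bigl(w_{\si^{(k)}_1\ldots\si^{(k)}_N}(X)\bigr)\;<\;\mathrm{Diam}\bigl(w_{\si^*_1\ldots\si^*_N}(X)\bigr)+\eps/4\;<\;3\eps/4.
\]
For such $k$, also $n_k\geq N$, so by the monotonicity observation,
\[
\mathrm{Diam}\bigl(w_{\si^{(k)}_1\ldots\si^{(k)}_{n_k}}(X)\bigr)\;\leq\;\mathrm{Diam}\bigl(w_{\si^{(k)}_1\ldots\si^{(k)}_N}(X)\bigr)\;<\;3\eps/4\;<\;\eps,
\]
contradicting the choice of $\si^{(k)}$ and $n_k$. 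This proves $(2)$.

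\medskip
There is no real obstacle here; the only thing worth being careful about is to extract the uniformity from pointwise smallness at the limit $\si^*$ via the correct continuity statement, namely Lemma \ref{continuidadeproduto} applied to the fixed length $N$ dictated by $\si^*$. The monotonicity observation is what allows a single $N$ chosen from $\si^*$ to control all large-index diameters along the approximating sequence.
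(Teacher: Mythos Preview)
Your proof is correct and follows essentially the same route as the paper: argue by contradiction, extract a limit point in $\Omega$, and use Lemma~\ref{continuidadeproduto} together with the monotonicity of $n\mapsto\mathrm{Diam}(w_{\si_1\ldots\si_n}(X))$ to reach a contradiction. The only cosmetic differences are that the paper carries out the diagonal extraction explicitly rather than invoking Tychonoff, and that it concludes by showing the limit sequence $\sigma^*$ itself violates weak hyperbolicity (i.e., $\mathrm{Diam}(w_{\sigma^*_1\ldots\sigma^*_m}(X))\geq\eps_0$ for all $m$), whereas you fix an $N$ from $\sigma^*$ first and contradict the choice of $\sigma^{(k)}$; the monotonicity step is used in both, though the paper leaves it implicit.
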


\begin{proof}
If $w$ satisfies (2), then it is obvious that $w$ satisfies (1). So, it is enough to prove that (1) implies (2). Let us suppose that (2) is false. Then there exists $\eps_{0}>0$, a sequence $(n_{k})\longrightarrow +\infty$ and a sequence of words (with alphabet in $\Lambda$):\\
$$(\sigma_{1}^{1},\sigma_{2}^{1},...,\sigma_{n_{1}}^{1}),(\sigma_{1}^{2},\sigma_{2}^{2},...,\sigma_{n_{2}}^{2}),...$$ such that:

\begin{equation}
\label{hip}
\diam(w_{\sigma_{1}^{k}...\sigma_{n_{k}}^{k}}(X))\geq \eps_{0}\,\,\textrm{for any}\,\,k\in \N.
\end{equation}
Thus we have the following matrix builded with these words:
\begin{eqnarray*}
\sigma_{1}^{1} \sigma_{2}^{1} &...& \sigma_{n_{1}}^{1}\\
\sigma_{1}^{2} \sigma_{2}^{2} &...& \sigma_{n_{1}}^{2} ... \sigma_{n_{2}}^{2}\\
\vdots& & \\
\sigma_{1}^{k} \sigma_{2}^{k} &...& \sigma_{n_{1}}^{k} ... \sigma_{n_{2}}^{k} ... \sigma_{n_{k}}^{k}\\
\vdots& &
\end{eqnarray*}

Now, using the compactness of $\Lambda$ and a diagonal argument we can obtain that each column of the matrix is convergent in $\Lambda$.
Indeed, the first column is a sequence in $\Lambda$ and then there exists a set $\N_{1}\subset \N$ such that $\{\sigma_{1}^{k}\}_{k\in \N_{1}}$ is convergent in $\Lambda.$ Analogously, there exists a set $\N_{2}\subset \N_{1}\subset \N$ such that $\{\sigma_{2}^{k}\}_{k\in \N_{2}}$ is convergent in $\Lambda$ and so on. In this way, we obtain a nested sequence of sets
$$\N\supset \N_{1}\supset \N_{2}\supset ...$$
and if we define a set $\N^{*}$ such that its first element is the first element of $\N_{1}$, its second element is the second element of $\N_{2}$ and so on, we obtain that the matrix  $\{\sigma_{j}^{k}\}_{k\in \N^{*},j\leq n_{k}}$ has all columns
convergent in $\Lambda$. Therefore, for simplicity, we can suppose that the initial matrix has all columns convergent and we can define  $\sigma=(\sigma_{1},\sigma_{2},...)\in \Om$ where each element of this sequence is the limit of the associated column. 
So, to finish the proof it is enough to prove that this sequence does not satisfy the definition of weak hyperbolicity. Indeed, fix $m\in \N$ and consider the word $(\sigma_{1},...,\sigma_{m}).$ Using that $(n_{k})\to \infty$ we have $m<n_{k}$, 
for every $k$ sufficiently large. Then it follows from (\ref{hip}) that
$$\diam(w_{\sigma_{1}^{k}...\sigma_{m}^{k}}(X))\geq \eps_{0},$$
for $k$ sufficiently large.
Since $(\sigma_{1}^{k},...,\sigma_{m}^{k})\stackrel{k}{\longrightarrow}(\sigma_{1},...,\sigma_{m})$ in the maximum metric, it follows from Lemma \ref{continuidadeproduto} that  $\diam(w_{\sigma_{1}...\sigma_{m}}(X))\geq \eps_{0}$. Since $m$ is arbitrary, this contradicts the definition of weak hyperbolicity and completes the proof.
\end{proof}

{Let us recall a property defined by Mat\'e in \cite{M}, in his axiomatic approach to the existence of attractors for IFS.}

\begin{definition}
Let $w:\La\times X\to X$ be an IFS. For each $\si\in\Om$, $n\in\N$, and $x\in X$, define $\Gamma(\sigma,n,x):= w_{\sigma_{1}...\sigma_{n}}(x)$. We say that $w$ satisfies \emph{Property P}$^*$ if
\begin{eqnarray}
\Gamma(\sigma):= \lim_{n \to \infty}\Gamma(\sigma,n,x)
\end{eqnarray}
exists for every $\sigma \in \Om$ and $x \in X$, does not depend on $x$ and is uniform on $\sigma$ and $x\in X$.

\begin{remark}
In \cite{BV1} (for instance) there is the notion of point fibered IFS, which is a weaker version of property P$^*$, since it do not require the limit to be uniform on $\si\in\Om$ and $x\in X$.
\end{remark}  
 
\end{definition}
\begin{corollary}

\label{pestrela}
Every weakly hyperbolic IFS $w:\La\times X\to X$, with $X$ and $\La$ compact metric spaces, satisfies property P$^*$.
\end{corollary}

\begin{proof}
Take $x\in X$ and $\eps>0$. By Lemma \ref{lemaprincipal} we have that there exists $n_{0}=n_{0}(\eps)$ such that:
$$\diam(w_{\sigma_{1}...\sigma_{n}}(X))<\eps,$$ 
for every $\si\in\Om$ and every $n\geq n_0$. Observe that $$\Gamma(\sigma,n,x)\in w_{\sigma_{1}...\sigma_{n}}(X)$$ and $$\Gamma(\sigma,n+p,x)\in w_{\sigma_{1}...\sigma_{n+p}}(X)\subset w_{\sigma_{1}...\sigma_{n}}(X),$$ 
and therefore we have that $d(\Gamma(\sigma,n+p,x),\Gamma(\sigma,n,x))<\eps\,\,\textrm{for all}\,\,n\geq n_{0}\,\,\textrm{and}\,\,p\in \N.$ 
Then, the sequence $\Gamma(\sigma,n,x)$ is Cauchy and thus convergent for all  $x\in X$ and $\sigma \in \Om$. {Since} $n_{0}$ does not depend on $\sigma$ we obtain the uniformity on $\sigma$. Now, take $\sigma \in \Om$ and $x,y \in X$. Then we have:
$$\Gamma(\sigma,n,x),\Gamma(\sigma,n,y) \in  w_{\sigma_{1}}\circ ... \circ w_{\sigma_{n}}(X),$$
and so
$$\lim_{n \to \infty}d(\Gamma(\sigma,n,x),\Gamma(\sigma,n,y))=0,$$
which shows that the limit does not depend on $x$. This completes the proof.
\end{proof}

Property P$^{*}$, in the case $\La=\{1,...,N\}$, was proved by \cite{M} to be a sufficient condition for the existence of an attractor. Here, we will prove this in the more general case of $\La$ being an arbitrary compact space. 
{For adapting his arguments we need some preparatory lemmas. The first one proves that the Hutchinson-Barnsley operator is continuous. The proof we give here also works in the case where $X$ is complete but not necessarily compact 
and will be used later in this paper.}

\begin{lemma}
{Let $X$ be a complete metric space and $\Lambda$ a compact metric space}. If $w:\Lambda\times X\to X$ is continuous, then
{the associated Hutchinson-Barnsley operator} $\SF$ is also continuous.
\label{continuidade}
\end{lemma}
\begin{proof}
Fix a compact set $K\subset X$. Take an $\eps>0$. Since $w$  is continuous and $\La$ is compact, there exists $\beta>0$ such that if $x\in K$ and $y\in X$ with $d(x,y)<\,\beta$, then $$d(w_{\lambda}(x),w_{\lambda}(y))<\eps,\:\:\textrm{for every}\:\:\lambda\in\La.$$

Assume that $A\in\cK(X)$ is such that $d_H(A,K)<\beta$. Let $x$ be a point in $K$ and take $a\in A$ with $$d(a,x)=d(x,A)<\beta.$$ Then $$d(w_{\la}(x),w_{\la}(A))\leq d(w_{\la}(x),w_{\la}(a))<\eps,\:\:\textrm{for every}\:\lambda\in\La.$$
In a similar manner we show that for every $a\in A$, $$d(w_{\la}(a),w_{\la}(K))<\eps,\:\:\textrm{for every}\:\la\in\La.$$ This proves that $$d_H(w_{\la}(A),w_{\la}(K))\leq\eps,\:\:\textrm{for every}\:\la\in\La,$$ and thus $$d_H(\SF(A),\SF(K))\leq\eps,$$ 
which establishes the result.
\end{proof}  

Observe that Corollary \ref{pestrela} defines a function $\Ga:\Om\to X$, given by $$\Ga(\si)=\lim_{n\to\infty}\Ga(\si,n,x),\:\:\textrm{for any}\:\:x\in X.$$
As in \cite{M} we have the following. 
\begin{lemma}
The map $\Gamma:\Om\to X$ is continuous in the product topology on $\Om$.
\label{continuidadeGamma}
\end{lemma}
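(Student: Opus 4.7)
The plan is to combine the uniform small-diameter property from Lemma \ref{lemaprincipal} with the uniform continuity of the iterated map $w^n$ on the compact product $\La^n\times X$, following the spirit of Lemma \ref{continuidadeproduto}.

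Fix $\si\in\Om$ and $\eps>0$. First, I would invoke Lemma \ref{lemaprincipal} to choose $n_0\in\N$ such that
$$Diam(w_{\tau_{1}\ldots\tau_{n_0}}(X))<\eps/3\quad\text{for every }\tau\in\Om.$$
In particular, since by Corollary \ref{propriedade$(P^{*})$} one has $\Ga(\tau)\in\overline{w_{\tau_1\ldots\tau_{n_0}}(X)}$ for every $\tau\in\Om$ (the tail of the Cauchy sequence $\Ga(\tau,n,x)$ lies in this set), both $\Ga(\si)$ and $\Ga(\tau)$ stay within distance $\eps/3$ from any point of their respective $n_0$-images.

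Second, I would use that $w^{n_0}:\La^{n_0}\times X\to X$ is continuous on the compact space $\La^{n_0}\times X$, hence uniformly continuous. As in the proof of Lemma \ref{continuidadeproduto}, there is $\de>0$ such that whenever $\max_{1\le i\le n_0}\rho(\si_i,\tau_i)<\de$ one has
$$d_H\bigl(w_{\si_1\ldots\si_{n_0}}(X),\,w_{\tau_1\ldots\tau_{n_0}}(X)\bigr)<\eps/3.$$
The set $U:=\{\tau\in\Om:\rho(\si_i,\tau_i)<\de,\ i=1,\ldots,n_0\}$ is open in the product topology and contains $\si$.

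Finally, for $\tau\in U$, pick $p\in w_{\si_1\ldots\si_{n_0}}(X)$ and $q\in w_{\tau_1\ldots\tau_{n_0}}(X)$ with $d(p,q)<\eps/3$. Then by the triangle inequality,
$$d(\Ga(\si),\Ga(\tau))\le d(\Ga(\si),p)+d(p,q)+d(q,\Ga(\tau))<\eps/3+\eps/3+\eps/3=\eps,$$
which proves continuity at $\si$. I do not expect any real obstacle here: everything reduces to the uniform estimate provided by Lemma \ref{lemaprincipal} together with the standard Hausdorff-continuity argument already carried out for Lemma \ref{continuidadeproduto}; the only point requiring mild care is observing that $\Ga(\tau)$ actually sits in the closure of $w_{\tau_1\ldots\tau_{n_0}}(X)$, which is immediate from the construction of $\Ga$ as the limit of the nested Cauchy sequence.
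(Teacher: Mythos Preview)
Your proof is correct and follows essentially the same three-term triangle inequality strategy as the paper: choose a depth $n_0$ so that the first $n_0$ coordinates determine $\Ga$ up to $\eps/3$, then use continuity of $w^{n_0}$ to handle perturbations of those coordinates. The only cosmetic difference is that the paper invokes Corollary~\ref{propriedade$(P^{*})$} and compares $w_{\si_1\ldots\si_m}(x)$ with $w_{\si_1^*\ldots\si_m^*}(x)$ at a fixed point $x$, whereas you invoke Lemma~\ref{lemaprincipal} directly and pass through the Hausdorff distance of the full images; both routes yield the same estimate.
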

\begin{proof}
Let us denote by $\rho$ the metric of $\Lambda$. Fix $\sigma\in \Om$ and $\eps>0$. By Corollary \ref{pestrela} we have that there exists $m=m(\eps)\in \N$ such that
$$d(w_{\sigma_{1}}\circ ... \circ w_{\sigma_{m}}(x),\Gamma(\sigma))<\eps\,\,\,\textrm{for all}\,\, \sigma\,\,\textrm{and}\,\,x.$$
Now, using that $w^{m}$ is continuous we get $a>0$ such that if 
$$\rho(\sigma_{1}^{*},\sigma_{1})<a,...,\rho(\sigma_{m}^{*},\sigma_{m})<a,$$ then
$$d(w_{\sigma_{1}...\sigma_{m}}(x),w_{\sigma_{1}^{*}...\sigma_{m}^{*}}(x))<\eps\,\,\textrm{for all}\,\,x.$$
Let $U$ be the neighborhood of $\sigma$ in the product topology given by:
$$U=B_{\rho}(\sigma_{1},a)\times ... \times B_{\rho}(\sigma_{m},a)\times \Lambda \times...$$
Therefore, if $\sigma^{*}\in U$, then:
\begin{eqnarray*}
&&d(\Gamma(\sigma^{*}),\Gamma(\sigma))\\
&\leq&d(\Gamma(\sigma),w_{\sigma_{1}...\sigma_{m}}(x))\nonumber \\
&+&d(w_{\sigma_{1}...\sigma_{m}}(x),w_{\sigma_{1}^{*}...\sigma_{m}^{*}}(x))+d(\Gamma(\sigma^{*}),w_{\sigma_{1}^{*}...\sigma_{m}^{*}}(x))\nonumber \\
&<&3\eps. \nonumber \\
\end{eqnarray*}
This shows that $\Gamma$ is continuous.
\end{proof}

Finally, we shall use the fixed point theorem of Jachymski \cite{J}. This theorem is a generalization of Banach's fixed point theorem.
Before state it we need a definition.
\begin{definition}
\label{contracaoassintotica}
Let $(X,d)$ be a metric space. We say that a map $T:X\to X$ is an \emph{asymptotic contraction} if $d(T^{n}(x),T^{n}(y))\stackrel{n\to +\infty}{\longrightarrow}0$ for all $x,y\in X$ and there exists $\eta>0$ such that this convergence is uniform if 
$d(x,y)\leq \eta$.
\end{definition}

\begin{theorem}[Jachymski]
\label{Jachymski}
Suppose that $(X,d)$ is a complete metric space and $T:X\to X$ is a continuous asymptotic contraction. Then there exists $x\in X$ such that:
$$d(T^{n}(y),x)\stackrel{n\to +\infty}{\longrightarrow}0\,\,\,\textrm{for all}\,\,\,y\in X.$$
\end{theorem}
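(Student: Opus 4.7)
My plan is to split the proof into three parts: (i) every orbit $\{T^n(y)\}$ is a Cauchy sequence in $X$, hence convergent by completeness; (ii) any limit of such a sequence is a fixed point of $T$; (iii) all orbits converge to the same fixed point.

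Parts (ii) and (iii) are the standard soft consequences. For (ii), if $T^n(y)\to x^*$, then continuity of $T$ gives $T^{n+1}(y)\to T(x^*)$ while also $T^{n+1}(y)\to x^*$; hence $T(x^*)=x^*$. For (iii), if $x^*$ is any fixed point of $T$, the pointwise asymptotic contraction applied to the pair $(y,x^*)$ gives $d(T^n(y),x^*)=d(T^n(y),T^n(x^*))\to 0$, so every orbit converges to $x^*$; in particular $x^*$ is the unique fixed point, so all orbits have the same limit.

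The main task is therefore (i). As a warm-up, applying the pointwise hypothesis to the pair $(y,T(y))$ yields $a_n:=d(T^n(y),T^{n+1}(y))\to 0$, and applied to $(y,T^j(y))$ it yields $d(T^n(y),T^{n+j}(y))\to 0$ for each fixed $j$. This is not yet Cauchyness, which demands control uniform in $j$. To bridge the gap one must invoke the \emph{uniform} clause in Definition \ref{contracaoassintotica}: given $\eps>0$, choose $k_0$ such that $d(T^{k_0}(x),T^{k_0}(z))<\eps$ whenever $d(x,z)\leq\eta$, and choose $N$ with $a_n<\eta$ for $n\geq N$. Then every pair of consecutive iterates on the tail of the orbit sits within $\eta$, and iterating $k_0$ more steps produces a pair within $\eps$; the goal is to bootstrap this into the estimate $\diam\{T^n(y):n\geq M\}\to 0$ as $M\to\infty$.

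The main obstacle is precisely this bootstrapping step: promoting the decay of $a_n$, together with the pointwise asymptotic contraction, to the uniform decay of tail diameters, which is exactly Cauchyness. This is the technical core of Jachymski's argument and the only nontrivial input; once it is in place, completeness of $X$ delivers the limit in (i), and (ii)--(iii) then finish the proof.
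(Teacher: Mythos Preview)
The paper does not supply its own proof of this theorem: immediately after the statement it writes ``For a proof, see \cite{J}.'' So there is nothing in the paper to compare your argument against.

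As for your outline itself: the decomposition into (i)--(iii) is the right structure, and your arguments for (ii) and (iii) are complete and correct. But you have not proved (i). You correctly isolate the crux---upgrading $a_n\to 0$ together with the uniform-on-$\eta$-close-pairs clause of Definition~\ref{contracaoassintotica} to genuine Cauchyness of the orbit---and then stop, labelling it ``the technical core of Jachymski's argument.'' That bootstrapping is exactly where all the content lies; the observation that consecutive tail iterates lie within $\eta$, and hence that $a_{n+k}<\eps$ for $k\geq k_0$, only re-derives $a_n\to 0$ and does not by itself control $d(T^m(y),T^n(y))$ for arbitrary $m,n$. Without an actual argument for this step the proof is incomplete. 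In effect your proposal and the paper end at the same place: both defer the substantive work to \cite{J}.
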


For a proof, se \cite{J}.\\

\begin{proof}[Proof of Theorem \ref{thm:2}] 
{Notice that} if $A\in \mathcal{K}(X)$ then we can write
$$\mathcal{F}^{n}(A)=\bigcup_{\sigma \in \Om} w_{\sigma_{1}...\sigma_{n}}(A).$$
Define 
$$K:=\Gamma(\Om)=\{\lim_{n \to \infty}\Gamma(\sigma,n,x):\sigma \in \Om\}$$
and notice that, by Lemma \ref{continuidadeGamma}, $K$ is a compact set. So, it {remains} to prove that $K$ is an attractor. In fact, given $B\subset X$ a compact set and $\eps >0$ we have by Corollary~\ref{pestrela} that there exists $n_{0}=n_{0}(\eps)$ such that:
$$d(\Gamma(\sigma,n,x),\Gamma(\sigma))<\eps\,\,\,\textrm{for all}\,\,n\geq n_{0},\,\, \sigma\in\Om,\,\,\textrm{and}\,\,x\in B.$$
Fix $n\geq n_{0}$. Then, for all $y\in \mathcal{F}^{n}(B)$ there exists $z\in K$ such that $d(y,z)<\eps$ and analogously given $z\in K$ there exists $y\in \mathcal{F}^{n}(B)$  such that $d(y,z)<\eps$. 
This shows that $d_H(\mathcal{F}^{n}(B),K)<\eps$ if $n\geq n_{0}$. Therefore, $\lim_{n\to +\infty}d_H(\mathcal{F}^{n}(B),K)=0$.
Using that $\mathcal{F}$ is continuous we have that $K$ is the unique compact invariant set of $w$.

To prove the statement on the fixed points, take $g=w_{\sigma_{1}}\circ ... \circ w_{\sigma_{n}}$ with $\sigma \in \Om$ and $n\geq 1$. Then we have that:
$$g^{m}(x)=w_{\sigma_{1}}\circ ... \circ w_{\sigma_{n}}\circ...\circ w_{\sigma_{1}}\circ ... \circ w_{\sigma_{n}}(x)$$ where the first block appears $m$ times. Then,
$$d(g^{m}(x),g^{m}(y))\leq Diam(w_{\sigma_{1}}\circ ... \circ w_{\sigma_{n}}\circ...\circ w_{\sigma_{1}}\circ ... \circ w_{\sigma_{n}}(X))$$ and by weak hyperbolicity we get that $d(g^{m}(x),g^{m}(y)) \longrightarrow 0$ for every $x,y \in X$ and 
this convergence is uniform in $X$. By Theorem~\ref{Jachymski}, $g$ has a unique contractive fixed point which we denote by $q_{\sigma_{1}...\sigma_{n}}$. To finish the proof, let us prove the density of the fixed points using the same arguments of Hutchinson
in \cite{H}. Consider
$A_{\sigma_{1}...\sigma_{p}}:=w_{\sigma_{1}...\sigma_{p}}(A).$
Using the invariance of $K$ one obtains that
$$K=\bigcup_{\sigma_{1}...\sigma_{p}}w_{\sigma_{1}...\sigma_{p}}(K)$$
and
$$K_{\sigma_{1}...\sigma_{p}}=\bigcup_{\sigma_{p+1}}K_{\sigma_{1}...\sigma_{p}\sigma_{p+1}}.$$
It follows that
$$K \supset K_{\sigma_{1}} \supset ... \supset K_{\sigma_{1} ... \sigma_{p}} \supset ...$$
{By compactness of $K$ and by weak hyperbolicity one obtains that this nested intersection is a singleton, which we shall denote by $k_{\sigma}$, for some $\sigma=(\sigma_1,...,\sigma_p,...)\in\Omega$.
Now, $k_{\sigma}\in K_{\sigma_{1}...\sigma_{p}}$ and $q_{\sigma_{1}...\sigma_{p}} \in K_{\sigma_{1}...\sigma_{p}}$. By weak hyperbolicity we get
$$k_{\sigma}= \lim_{p\to \infty}q_{\sigma_{1}...\sigma_{p}},$$ which ends the proof.}
\end{proof}

\section{Proof of Theorem \ref{teoremaatratorglobalmetrico}}\label{metricattractorcompacto}

We want to show that every weakly hyperbolic IFS has a measure-theoretical attractor. 
Since the iterates of the transfer operator depend on the behavior of the sequences $\Ga(\si,n,x)$, Corollary~\ref{pestrela} will be a key tool.
Indeed, in \cite{M} Mat\'e also proved that for an IFS with finite parameter space, Property $P^*$ implies the existence of a measure theoretical attractor. Here we extend his arguments for arbitrary compact 
parameter spaces.

The proof consists in showing that the iterates of a Dirac measure under the transfer operator converge to a probability measure, which is invariant by the IFS. Then, we 
proceed to show that $d(T_p^n\mu,T_p^n\delta_a)\to 0$, for any probability measure $\mu$. A key technical point is to establish the continuity of the transfer operator, which in fact is 
our first lemma.    

\begin{lemma}
\label{transfer}
If $w:\Lambda\times X\to X$ is  continuous and $X$ is compact, then for all $p\in \mathcal{P}(\Lambda)$, the transfer operator $T_p$ is continuous in the weak$^*$ topology.
\end{lemma}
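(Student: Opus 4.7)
The plan is to verify continuity of $T_p$ sequentially. Since $X$ is compact, the weak$^*$ topology on $\mathcal{M}(X)$ is metrizable (by Theorem \ref{metricacompacta}), so it suffices to check that whenever $\mu_n\to\mu$ in weak$^*$, we have $T_p(\mu_n)\to T_p(\mu)$ in weak$^*$; equivalently, for every $f\in C(X)=C_b(X)$,
$$\int_X f\,dT_p(\mu_n) \;\longrightarrow\; \int_X f\,dT_p(\mu).$$

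The first step is to rewrite $\int f\,dT_p(\mu)$ via the push-forward formulation. Since $w:\La\times X\to X$ is continuous and $\La\times X$ is compact, the function $(\la,x)\mapsto f(w(\la,x))$ is continuous and bounded, so Fubini's theorem (applied to the product measure $p\otimes\mu$) yields
$$\int_X f\,dT_p(\mu) \;=\; \int_\La \int_X f(w_\la(x))\,d\mu(x)\,dp(\la).$$
In particular, the map $\la\mapsto g_\mu(\la):=\int_X f\circ w_\la\,d\mu$ is measurable (in fact continuous in $\la$, by the uniform continuity of $(\la,x)\mapsto f(w_\la(x))$ together with the tightness of $\mu$ on compact $X$), and is uniformly bounded by $\|f\|_\infty$.

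Now, for each fixed $\la\in\La$, the function $f\circ w_\la$ belongs to $C(X)$, so weak$^*$ convergence $\mu_n\to\mu$ gives
$$g_{\mu_n}(\la) \;=\; \int_X f\circ w_\la\,d\mu_n \;\longrightarrow\; \int_X f\circ w_\la\,d\mu \;=\; g_\mu(\la)\quad\textrm{for every }\la\in\La.$$
Since the family $\{g_{\mu_n}\}$ is uniformly bounded by the $p$-integrable constant $\|f\|_\infty$ (with $p$ a probability), the dominated convergence theorem gives
$$\int_\La g_{\mu_n}(\la)\,dp(\la) \;\longrightarrow\; \int_\La g_\mu(\la)\,dp(\la),$$
which, combined with the Fubini identity above, yields $\int f\,dT_p(\mu_n)\to \int f\,dT_p(\mu)$.

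The proof is essentially routine; the only mild subtlety is justifying the Fubini swap, which requires that $(\la,x)\mapsto f(w_\la(x))$ be jointly measurable — this is immediate from its joint continuity on $\La\times X$. All remaining ingredients (dominated convergence with the uniform bound $\|f\|_\infty$, measurability of $g_\mu$) are standard once this reformulation is in place.
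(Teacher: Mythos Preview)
Your proof is correct. Both your argument and the paper's hinge on the identity
$$\int_X f\,dT_p(\mu)=\int_\La\int_X f\circ w_\la\,d\mu\,dp,$$
but from there the two diverge slightly. You fix $\la$, apply weak$^*$ convergence to get $g_{\mu_n}(\la)\to g_\mu(\la)$ pointwise, and then invoke dominated convergence over $\La$. The paper instead applies Fubini once more to swap the order of integration, obtaining $\int_X\Phi\,d\mu_n$ with $\Phi(x)=\int_\La f\circ w_\la(x)\,dp$; after checking that $\Phi$ is continuous, a single application of weak$^*$ convergence finishes the argument, with no need for dominated convergence. Your route trades the verification that $\Phi\in C(X)$ for the DCT step; both are routine, and the arguments are of comparable length and difficulty.
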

\begin{proof}
Suppose that $\mu_n\to\mu$ in the weak$^*$ topology of $\mathcal{P}(X)$. We will show that $T_p\mu_n\to T_p\mu$.

Indeed, take $f\in C(X)$ and observe that 
$$\int fdT_p\mu_n=\int_{\Lambda}\int_Xf\circ w_{\lambda}d\mu_ndp=\int_X\int_{\Lambda}f\circ w_\lambda dpd\mu_n.$$
Note that the function $\Phi:X\to\R$, defined by $x\mapsto\int_{\Lambda}f\circ w_{\lambda}(x)dp$ is continuous. 

Since $\mu_n\to\mu$ in the weak$^{*}$ topology, it follows that:
$$\int_X\Phi d\mu_n\to\int_X\Phi d\mu.$$ So,
$$\int_X\int_{\Lambda}f\circ w_\lambda dpd\mu_n\to\int_X\int_{\Lambda}f\circ w_\lambda dpd\mu.$$
This completes the proof.
\end{proof}

Now, we can prove the existence of an invariant measure.

\begin{lemma}
For every $a\in X$, the sequence of measures $\{T_{p}^n(\delta_a)\}$ is convergent on the weak$^*$ topology in $\mathcal{M}(X)$. As a consequence, $\nu=\lim{\{T_{p}^n(\delta_a)\}}$ is an invariant measure for the IFS $w$.
\label{convergenciadirac}
\end{lemma}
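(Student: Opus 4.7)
The plan is to compute iterates of the transfer operator on $\delta_a$ explicitly as integrals of $f\circ\Gamma(\cdot,n,a)$ over the shift space, and then invoke Corollary \ref{propriedade$(P^{*})$} to pass to the limit.

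First, I would unwind the definition of $T_p$ by induction. Starting from $T_p(\delta_a)(f)=\int_\La f(w_\la(a))\,dp(\la)$ and iterating, a straightforward induction shows that for every $f\in C(X)$,
\begin{equation*}
\int_X f\,dT_p^n(\delta_a)=\int_{\La^n} f\bigl(w_{\si_1}\circ\cdots\circ w_{\si_n}(a)\bigr)\,dp^n(\si_1,\dots,\si_n)=\int_\Om f\bigl(\Gamma(\si,n,a)\bigr)\,d\p(\si),
\end{equation*}
where $\p=p^{\infty}$ and I am viewing the integrand on the right-hand side as a function depending only on the first $n$ coordinates of $\si\in\Om$.

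Next, I would apply Corollary \ref{propriedade$(P^{*})$}: since $w$ is weakly hyperbolic and $X$ is compact, $\Gamma(\si,n,a)\to\Gamma(\si)$ uniformly in $\si\in\Om$. Any $f\in C(X)$ is uniformly continuous on the compact space $X$, hence $f\circ\Gamma(\cdot,n,a)\to f\circ\Gamma(\cdot)$ uniformly on $\Om$, and moreover $\|f\circ\Gamma(\cdot,n,a)\|_\infty\le\|f\|_\infty$. Integrating against the probability measure $\p$, I obtain
\begin{equation*}
\int_X f\,dT_p^n(\delta_a)\;\xrightarrow[n\to\infty]{}\;\int_\Om f\bigl(\Gamma(\si)\bigr)\,d\p(\si).
\end{equation*}
Denote by $\nu:=\Gamma_*\p$ the push-forward probability measure on $X$; since $\Gamma$ is continuous (Lemma \ref{continuidadeGamma}) and $\Om$ is compact, $\nu\in\SM(X)$ is well-defined. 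The display above is precisely the statement $T_p^n(\delta_a)\to\nu$ in the weak$^*$ topology, and by Theorem \ref{metricacompacta} the limit does not depend on the choice of $a$.

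Finally, invariance follows from Lemma \ref{transfer}: the transfer operator $T_p$ is continuous in the weak$^*$ topology, so
\begin{equation*}
T_p(\nu)=T_p\Bigl(\lim_n T_p^n(\delta_a)\Bigr)=\lim_n T_p^{n+1}(\delta_a)=\nu.
\end{equation*}
I do not expect a serious obstacle here: the only subtle point is justifying the interchange of limit and integral, and uniform convergence of $f\circ\Gamma(\cdot,n,a)$ on $\Om$ (granted by Corollary \ref{propriedade$(P^{*})$}) handles this cleanly, bypassing any need for dominated convergence arguments beyond a trivial uniform bound.
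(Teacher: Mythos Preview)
Your proof is correct and uses the same key inputs as the paper: the formula $\int f\,dT_p^n(\delta_a)=\int f\circ\Gamma(\cdot,n,a)\,dp^n$, Corollary~\ref{propriedade$(P^{*})$} for the uniform convergence of $\Gamma(\cdot,n,a)$, uniform continuity of $f$, and Lemma~\ref{transfer} for invariance. The only structural difference is that the paper shows the numerical sequence $\int f\,dT_p^n(\delta_a)$ is Cauchy and then invokes completeness of $\SM(X)$ (via Theorem~\ref{metricacompacta}) to get an abstract limit $\nu$, whereas you identify the limit directly as $\Gamma_*\p$ and show convergence to it. Your route is slightly more economical, since it yields at once the identification $\nu=\Gamma_*\p$ that the paper establishes only later in Lemma~\ref{medidaimagemproduto}; on the other hand the paper's Cauchy argument avoids any forward reference to $\Gamma$ and Lemma~\ref{continuidadeGamma}. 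One small remark: your appeal to Theorem~\ref{metricacompacta} for independence of $a$ is unnecessary---your limit $\Gamma_*\p$ is visibly independent of $a$.
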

\begin{proof}
We have to prove that $\{T_{p}^n(\delta_a)\}$ is a Cauchy sequence in $\SM(X)$. By Theorem \ref{metricacompacta} it is enough to prove that $\int_{X}fdT_{p}^n(\delta_a)$ is a Cauchy sequence of numbers, 
for every $f\in C^0(X)$ with $||f||_{0}=1$. 

By definition of the transfer operator we have that:
$$\int{fdT^n(\delta_a)}=\int_{\Lambda^n}f\circ\Gamma(\sigma,n,a)dp^n.$$
Take $n>m$. Then, using that $p$ is a probability, we get 
$$\int_{\Lambda^{n-m}}\int_{\Lambda^m}f\circ\Gamma(\sigma,m,a)dp^mdp^{n-m}=\int_{\Lambda^n}f\circ\Gamma(\sigma,m,a)dp^n.$$ Hence,
\begin{eqnarray*}
&&\left|\int{fdT^n(\delta_a)}-\int{fdT^m(\delta_a)}\right|\\
&=&\left|\int_{\Lambda^n}f\circ\Gamma(\sigma,n,a)dp^n\right.\\
\ &-&\left.\int_{\Lambda^m}f\circ\Gamma(\sigma,m,a)dp^m\right|\\
&\leq&\int_{\Lambda^n}|f\circ\Gamma(\sigma,n,a)-f\circ\Gamma(\sigma,m,a)|dp^n.
\end{eqnarray*}
Since $f$ is uniformly continuous, there exists $\delta>0$ such that if $d(x,y)<\delta$, then $|f(x)-f(y)|<\eps.$ By Corollary \ref{pestrela}, there exists $n_0=n_0(\eps)>0$ such that if $m,n\geq n_0$, then
$$d(\Gamma(\sigma,n,a),\Gamma(\sigma,m,a))<\delta.$$
Therefore, $\{T_{p}^n(\delta_a)\}$ is a Cauchy sequence. Since $\SM(X)$ is complete, there exists $\nu=\lim{\{T_{p}^n(\delta_a)\}}$. By Lemma~\ref{transfer} it follows that $\nu$ is an invariant measure.
\end{proof}

The next step is to prove that $\nu$ is in fact a measure-theoretical attractor for the IFS. 

\begin{lemma}
For all $\mu\in\mathcal{P}(X)$ and $a\in X$ the sequences $\{T^n(\delta_a)\}$ and $\{T^n(\mu)\}$ have the same limit in the weak$^*$ topology. As a consequence,  $T_p^{n}(\mu)\stackrel{n}{\longrightarrow}\nu$ in the weak$^*$ topology if $\mu \in \mathcal{M}(X).$
\label{convergenciamedidaqualquer}
\end{lemma}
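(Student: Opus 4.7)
The plan is to exploit the uniform shrinking of images provided by Lemma \ref{lemaprincipal}. Given any $\mu\in\mathcal{P}(X)$ and $a\in X$, the natural thing to do is to test against a continuous function $f\in C(X)$ and compare the integrals. By the definition of $T_p$ and Fubini, I can write
\[
\int f \, dT_p^n(\mu) - \int f \, dT_p^n(\delta_a) = \int_{\Lambda^n}\int_X \bigl[ f(w_{\sigma_1\dots\sigma_n}(x)) - f(w_{\sigma_1\dots\sigma_n}(a)) \bigr] \, d\mu(x)\, dp^n(\sigma).
\]
So the task reduces to bounding the inner integrand uniformly in $x\in X$ and $\sigma\in\Lambda^n$.

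First I would use compactness of $X$ to get uniform continuity of $f$: given $\eta>0$, there exists $\delta>0$ such that $d(u,v)<\delta$ implies $|f(u)-f(v)|<\eta$. Then Lemma \ref{lemaprincipal} supplies $n_0=n_0(\delta)$ such that $\diam(w_{\sigma_1\dots\sigma_n}(X))<\delta$ for every $n\geq n_0$ and every $\sigma\in\Omega$. In particular, for such $n$, any pair $w_{\sigma_1\dots\sigma_n}(x)$ and $w_{\sigma_1\dots\sigma_n}(a)$ lie within $\delta$ of each other, so the integrand is bounded in absolute value by $\eta$. Since both $\mu$ and $p^n$ are probability measures, integrating yields
\[
\left| \int f \, dT_p^n(\mu) - \int f \, dT_p^n(\delta_a) \right| \leq \eta \qquad \text{for every } n\geq n_0.
\]

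As $\eta$ is arbitrary, the sequences $\{\int f \, dT_p^n(\mu)\}$ and $\{\int f \, dT_p^n(\delta_a)\}$ share the same limit; since this happens for every $f\in C(X)$, the sequences $\{T_p^n(\mu)\}$ and $\{T_p^n(\delta_a)\}$ have the same weak$^*$ limit in $\mathcal{M}(X)$. Combining this with Lemma \ref{convergenciadirac}, which gives $T_p^n(\delta_a)\to\nu$, we conclude that $T_p^n(\mu)\to\nu$ in the weak$^*$ topology, as desired.

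There is no real obstacle here: the heavy lifting was done by Lemma \ref{lemaprincipal} (the uniform diameter estimate) and Lemma \ref{convergenciadirac} (the Cauchy argument producing $\nu$). The current lemma is essentially a comparison argument that leverages the uniform collapse of diameters to propagate convergence from Dirac masses to arbitrary probability measures.
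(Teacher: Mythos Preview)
Your proof is correct and follows essentially the same approach as the paper: both compare $\int f\,dT_p^n(\mu)$ and $\int f\,dT_p^n(\delta_a)$ via the integral representation, use uniform continuity of $f$, and invoke the uniform diameter shrinking to bound the integrand. The only cosmetic difference is that the paper phrases the key estimate through Corollary~\ref{propriedade$(P^{*})$} (Property~P$^*$) rather than citing Lemma~\ref{lemaprincipal} directly, but the content is the same.
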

\begin{proof}
As before, it is enough to show that if $||f||_{0}=1$, then $\left|\int fdT^n(\mu)-\int fdT^n(\delta_a)\right|$ goes to zero. Take $\eps>0$. Notice that
$$\int fdT^n(\mu)=\int_{\Lambda^n}\int_Xf\circ\Gamma(\sigma,n,x)d\mu dp^n.$$ Since $\mu$ is a probability we have that $$f\circ\Gamma(\sigma,n,a)=\int_Xf\circ\Gamma(\sigma,n,a)d\mu.$$
Hence, we get
\begin{eqnarray*}
\label{modulopradentrodenvo}
&&\left|\int fdT^n(\mu)-\int fdT^n(\delta_a)\right|\\
&\leq&\int_{\Lambda^n}\int_X|f\circ\Gamma(\sigma,n,x)-f\circ\Gamma(\sigma,n,x)|d\mu dp^n. 
\end{eqnarray*}
From the uniform continuity of $f$ and from Corollary \ref{pestrela} we have that the right-hand side of (\ref{modulopradentrodenvo}) is less than $\eps$ for every large $n$. This finishes the proof.
\end{proof}

Now, to conclude the proof of Theorem \ref{teoremaatratorglobalmetrico}, it only remains to prove that the support of $\nu$ is the attractor $K$.

Define for each $\lambda \in \Lambda$ the map $\eta_{\lambda}:\Om\to \Om$ by $\eta_{\lambda}(\sigma_{1},\sigma_{2},...):=(\lambda,\sigma_{1},\sigma_{2},...)$. Notice that  $\Gamma \circ \eta_{\lambda}=w_{\lambda}\circ \Gamma.$

\begin{lemma}
If $\mathbb{P}$ is the product measure in $\Om$ induced by $p\in \mathcal{M}(\Lambda)$, then we have that $\Gamma^{*}(\mathbb{P})=\nu$.
\label{medidaimagemproduto}
\end{lemma}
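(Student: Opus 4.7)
The plan is to identify the pushforward $\Gamma^{*}(\mathbb{P})$ with $\nu$ by comparing two ways of computing the limit of $T_p^n(\delta_a)$ tested against continuous functions. Since $X$ is compact, weak$^*$ convergence is determined by $C(X)$, so it suffices to show that $\int f\, d\Gamma^{*}(\mathbb{P})=\int f\, d\nu$ for every $f\in C(X)$.

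First, I would rewrite the iterate of the transfer operator applied to a Dirac mass in integral form. By induction on the definition of $T_p$ and Fubini's theorem, one has
\[
\int_X f\, dT_p^{n}(\delta_a)=\int_{\Lambda^{n}} f\bigl(w_{\sigma_{1}\dots\sigma_{n}}(a)\bigr)\, dp^{n}(\sigma_{1},\dots,\sigma_{n})=\int_{\Om} f\bigl(\Gamma(\sigma,n,a)\bigr)\, d\mathbb{P}(\sigma),
\]
the last equality following because the integrand depends only on the first $n$ coordinates of $\sigma\in\Om$, and $\mathbb{P}$ restricted to those coordinates is $p^{n}$.

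Second, I would pass to the limit as $n\to\infty$ on both sides. By Corollary \ref{propriedade$(P^{*})$}, $\Gamma(\sigma,n,a)\to\Gamma(\sigma)$ uniformly in $\sigma\in\Om$. Since $f$ is uniformly continuous on the compact set $X$, it follows that $f\circ\Gamma(\cdot,n,a)\to f\circ\Gamma$ uniformly on $\Om$. In particular,
\[
\int_{\Om} f\bigl(\Gamma(\sigma,n,a)\bigr)\, d\mathbb{P}(\sigma)\longrightarrow\int_{\Om} f\circ\Gamma\, d\mathbb{P}=\int_X f\, d\Gamma^{*}(\mathbb{P}).
\]
On the other hand, by Lemma \ref{convergenciadirac}, $T_p^{n}(\delta_a)\to\nu$ in the weak$^{*}$ topology, hence $\int f\, dT_p^{n}(\delta_a)\to\int f\, d\nu$.

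Equating the two limits yields $\int f\, d\Gamma^{*}(\mathbb{P})=\int f\, d\nu$ for all $f\in C(X)$, so $\Gamma^{*}(\mathbb{P})=\nu$. I do not expect any serious obstacle here: the only subtle point is justifying the first displayed identity (the Fubini step involving the infinite product measure), but this is a standard calculation using the cylindrical structure of $\mathbb{P}=p^{\infty}$. The relation $\Gamma\circ\eta_\la=w_\la\circ\Gamma$ noted before the lemma is not strictly needed for this approach, although it would furnish an alternative proof by checking that $\Gamma^{*}(\mathbb{P})$ is $T_p$-invariant and invoking the uniqueness established in Lemma \ref{convergenciamedidaqualquer}.
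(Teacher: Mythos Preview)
Your argument is correct. It differs from the paper's proof, which instead verifies directly that $\Gamma^{*}(\mathbb{P})$ is a fixed point of $T_p$: using the conjugacy $\Gamma\circ\eta_\lambda=w_\lambda\circ\Gamma$ together with the fact that $\mathbb{P}$ is invariant for the shift IFS $\{\eta_\lambda\}_{\lambda\in\Lambda}$, one computes
\[
T_p(\Gamma^{*}\mathbb{P})=\int_\Lambda w_\lambda^{*}(\Gamma^{*}\mathbb{P})\,dp=\int_\Lambda \Gamma^{*}(\eta_\lambda^{*}\mathbb{P})\,dp=\Gamma^{*}\Bigl(\int_\Lambda \eta_\lambda^{*}\mathbb{P}\,dp\Bigr)=\Gamma^{*}\mathbb{P},
\]
and then invokes the uniqueness from Lemma~\ref{convergenciamedidaqualquer}. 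Your route is the more direct of the two: it avoids checking invariance of $\mathbb{P}$ under the shift IFS and relies only on the integral formula for $T_p^n(\delta_a)$ already established in Lemma~\ref{convergenciadirac} plus the uniform convergence in Corollary~\ref{propriedade$(P^{*})$}. The paper's route, on the other hand, is more structural and makes explicit that $\Gamma$ intertwines the two iterated function systems, a fact of independent interest. You correctly identified this alternative at the end of your write-up.
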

\begin{proof}
We will prove that $\Gamma^{*}(\mathbb{P})$ is a fixed point of the transfer operator for $w$, since we already know that $T_p$ has a unique fixed point. For that,
we begin observing that $\mathbb{P}$ is a fixed point of transfer operator to the IFS $\eta:\Lambda\times\Om\to\Om$. Since $\Gamma \circ \eta_{\lambda}=w_{\lambda}\circ \Gamma$, we can write
\begin{eqnarray*}
T_{p}(\Gamma^{*}(\mathbb{P}))&=&\int_{\Lambda}w_{\lambda}^{*}(\Gamma^{*}(\mathbb{P}))dp(\lambda)=\int_{\Lambda}\Gamma^{*}(\eta_{\lambda}^{*}\mathbb{P})dp(\lambda)\nonumber\\ 
&=&\Gamma^{*}\left(\int_{\Lambda}(\eta_{\lambda}^{*}\mathbb{P})dp(\lambda)\right)=\Gamma^{*}(\mathbb{P}).\nonumber \\
\end{eqnarray*}
This establishes the lemma.
\end{proof}
By Lemma \ref{medidaimagemproduto} and opening of the measure $p$, we get:
$$supp(\nu)=\Gamma(supp(\mathbb{P}))=K.$$
and this finishes the proof of Theorem \ref{teoremaatratorglobalmetrico}.

\section{Proof of Theorem \ref{ergodicidadedamedida}}\label{provaergodica}
To prove Theorem \ref{ergodicidadedamedida} we use the ergodicity of the shift map $\beta:\Omega\to \Omega$, which is given by 
$$\beta(\sigma_{1},\sigma_{2},...)=(\sigma_{2},\sigma_{3},...).$$ Recall that the product measure $\p$ in $\Om$ is an ergodic invariant measure for the shift map. 
The key tool for relate the shift map with the IFS is the skew product map $\tau:\Omega\times X\to \Omega\times X$, which is defined by
$$\tau(\sigma,x):=(\beta(\sigma),w_{\sigma_{1}}(x)).$$

{
Indeed, let us show how to relate ergodic averages for the IFS with ergodic averages for the skew product.
Fix $f:X\to\R$ continuous function. Let us extend $f$ to $\Omega\times X$ by $f^{\prime}:\Omega\times X\to\R$, constant in the first variable. 
In other words, $f^{\prime}(\sigma,x)=f(x)$. This implies that
\begin{equation}
\label{formulaone}
\int_{\Omega\times X}f^{\prime}d(\mathbb{P}\times\nu)=\int_Xfd\nu.
\end{equation}
Now, observe that 
\begin{eqnarray*}
f^{\prime}\left(\tau^n(\sigma,x)\right)&=&f^{\prime}\left(\beta^n(\sigma),w_{\sigma_n}\circ...\circ w_{\sigma_1}(x)\right)\\
&=&f(w_{\sigma_n}\circ...\circ w_{\sigma_1}(x)).
\end{eqnarray*}
So,
\begin{equation}
\label{relate}
 \frac{1}{n}\sum_{j=0}^{n-1}f^{\prime}\left(\tau^j(\sigma,x)\right)=\frac{1}{n}\sum_{j=0}^{n-1}f(w_{\sigma_j}\circ...\circ w_{\sigma_1}(x)).
\end{equation}
}

We have the following general result, which allows us to obtain an invariant measure for the skew product from an invariant measure for the IFS.

\begin{lemma}
\label{l.skewproduct}
Let $X$ and $\Lambda$ be compact metric spaces. 
If $\mu\in \mathcal{M}(X)$ is an invariant measure for an IFS $w:\La\times X\to X$, then the measure $\mathbb{P}\times\mu$ is invariant by $\tau$.
\end{lemma}
\begin{proof}
We want to show that for every integrable function $f:\Om\times X\to\R$ one has
\begin{equation}
\int_{\Omega\times X}f\circ\tau d(\mathbb{P}\times\mu)=\int_{\Omega\times X}f d(\mathbb{P}\times\mu).
\end{equation}
For this we shall interchange the order of integration and use a suitable split of $\Om$. To be precise, observe that the product measure in $\Om$ coincides in cylinders with the product measure in $\La\times\Om$. 
Since the $\si$-algebra of both spaces is generated by cylinders, it follows that the two measure spaces coincide. Therefore, we can split any integration in $\Om$ as an integration in $\La\times\Om$. Using this, one can write
\begin{eqnarray*}
&&\int_{\Omega\times X}f\left(\beta(\sigma),w_{\sigma_1}(x)\right)d(\mathbb{P}\times\mu)\\
&=&\int_{\Omega}\int_Xf\left(\beta(\sigma),w_{\sigma_1}(x)\right)d\mu d\mathbb{P}\\
&=&\int_{\Omega}\int_{\Lambda}\int_Xf\left(\beta(\sigma),w_{\sigma_1}(x)\right)d\mu dpd\mathbb{P}.
\end{eqnarray*}
By invariance of $\mu$ and integrability of $x\mapsto f\left(\beta(\sigma),x\right)$ for all $\sigma$, we have $$\int_{\Lambda}\int_Xf\left(\beta(\sigma),w_{\sigma_1}(x)\right)d\mu dp=\int_Xf\left(\beta(\sigma),x\right)d\mu,$$ 
for all $\sigma\in\Omega$. On the other hand, by invariance of $\mathbb{P}$ with respect to $\beta$ in $\Omega$, and integrability of $\si\mapsto f(\si,x)$ for all $x\in X$, we get
$$\int_{\Omega}f\left(\beta(\sigma),x\right)d\mathbb{P}=\int_{\Omega}f\left(\sigma,x\right)d\mathbb{P}.$$
Using these two facts and interchanging the order of integration we have that $$\int_{\Omega}\int_{\Lambda}\int_Xf\left(\beta(\sigma),w_{\sigma_1}(x)\right)d\mu dpd\mathbb{P}=\int_{\Omega}\int_{X}f\left(\sigma,x\right)d\mu d\mathbb{P}.$$ This finishes the lemma.
\end{proof}

\begin{proof}[Proof of Theorem \ref{ergodicidadedamedida}]  
Let $K\subset X$ be the unique attractor of $w$ and $\nu$ the unique invariant measure.

We want to show that for all $x\in X$, $\mathbb{P}$-q.t.p. $\sigma\in\Omega$, and for any continuous function $f:X\to\R$ we have:
\begin{equation}
\lim_{n\to\infty}\frac{1}{n}\sum_{j=0}^{n-1}f\left(w_{\sigma_j}\circ...\circ w_{\sigma_1}(x)\right)=\int_Xfd\nu. 
\label{ergodicidade}
\end{equation}
The initial step is to show that the limit on the left side of (\ref{ergodicidade}) exists for  $\mathbb{P}$-a.e. $\sigma\in\Omega$.

By Lemma \ref{l.skewproduct} and the Ergodic Theorem on $\tau$, we obtain that for $\mathbb{P}\times\nu$-a.e. $(\sigma,x)$ 
\begin{equation}
\label{amediaexiste}
f^*(\sigma,x)=\lim_{n\to\infty}\frac{1}{n}\sum_{j=0}^{n-1}f^{\prime}\left(\tau^j(\sigma,x)\right)
\end{equation}
exists. 

Consider the set $$\Omega^*=\left\{\sigma\in\Omega;\:\textrm{there exists}\:x\in X\:\textrm{such that}\:f^*(\sigma,x)\:\textrm{is defined}\right\}.$$ We claim that $\mathbb{P}(\Omega^*)=1$. 
In fact, let us suppose that for some $A\subset\Omega$, with $\mathbb{P}(A)>0$, if $\sigma\in A$, then $f^*(\sigma,x)$ do not exist for all $x\in X$. By Fubini's Theorem, this implies the existence of a set of positive $\mathbb{P}\times\nu$-measure in $\Omega\times X$ such that $f^*(\sigma,x)$ do not exist, and this is an absurd with (\ref{amediaexiste}).

Now, let us see that Corollary \ref{pestrela} implies that if $f^*(\sigma,x)$ exists for some $x\in X$ then $f^*(\sigma,y)$ also exists, for all $y\in X$, and $f^*(\sigma,x)=f^*(\sigma,y)$.

To prove this, fix $(\sigma,x)$ such that $f^*(\sigma,x)$ exists, and $y\in X$. By triangle inequality we only have to prove that
\begin{equation}
\label{estima}
\frac{1}{n}\sum_{j=0}^{n-1}\left|f^{\prime}\left(\tau^j(\sigma,x)\right)-f^{\prime}\left(\tau^j(\sigma,y)\right)\right|\to 0,
\end{equation}
when $n\to\infty$. Let us prove this. By Corollary \ref{pestrela} we have that that
for all $\delta>0$, there exists $n_0=n_0(\delta)$ such that if $n\geq n_0$, then 
$$\sup_{\alpha\in\Omega}d\left(w_{\alpha_1}\circ...\circ w_{\alpha_n}(a),w_{\alpha_1}\circ...\circ w_{\alpha_n}(b)\right)\leq\delta\:\:\:\:\textrm{for all}\:a,b\in X.$$ 
In particular, given $a,b\in X$, $\sigma\in\Omega$, and $n\geq n_0$ we have
\begin{equation}
\label{lemadoandre}
d\left(w_{\sigma_n}\circ...\circ w_{\sigma_1}(a),w_{\sigma_n}\circ...\circ w_{\sigma_1}(b)\right)\leq\delta.
\end{equation}

Now, take $\eps>0$. By uniform continuity and the above remark, there exists  $n_1>0$ such that if $n\geq n_1$, then
$$\left|f(w_{\sigma_n}\circ...\circ w_{\sigma_1}(a))-f(w_{\sigma_n}\circ...\circ w_{\sigma_1}(b))\right|<\eps,$$ for all $a,b\in X$.

Take $n_2>n_1$ such that $2\frac{n_1C}{n_2}<\eps$, where
$$C=\max_{0\leq j\leq n_1}\left\{|f(w_{\sigma_j}\circ...\circ w_{\sigma_1}(x)|,|f(w_{\sigma_j}\circ...\circ w_{\sigma_1}(y)|\right\}.$$ 
Therefore, if $n\geq n_2$, then
\begin{eqnarray*}
&&\frac{1}{n}\sum_{j=0}^{n-1}\left|f^{\prime}\left(\tau^j(\sigma,x)\right)-f^{\prime}\left(\tau^j(\sigma,y)\right)\right|\\
&=&\frac{1}{n}\sum_{j=0}^{n_1-1}\left|f^{\prime}\left(\tau^j(\sigma,x)\right)-f^{\prime}\left(\tau^j(\sigma,y)\right)\right|\\
&+&\sum_{j=n_1}^{n-1}\left|f^{\prime}\left(\tau^j(\sigma,x)\right)-f^{\prime}\left(\tau^j(\sigma,y)\right)\right|\\
&<&\frac{2n_1C}{n}+\frac{(n-n_1)\eps}{n}\\
&<&2\eps.
\end{eqnarray*}
This shows the desired.
Thus, $f^*(\sigma,x)$, for $\sigma\in\Omega^*$, is constant in $x$. Since the ergodic theorem applied to the skew product $\tau$ implies that 
$$\int_{\Omega\times X}f^*d(\mathbb{P}\times\nu)=\int_{\Omega\times X}f^{\prime}d(\mathbb{P}\times\nu),$$ by equalities (\ref{formulaone}) and (\ref{relate})
it  only remains to prove that $f^*(\sigma,x)$ is  constant for $\mathbb{P}$-a.e. $\sigma\in\Omega$. For this, we use the ergodicity of $(\be,\p)$. Indeed, if we prove that
\begin{equation}
\label{estimadois}
f^*(\be(\si),x)=f^*(\si,x),
\end{equation}
then from the ergodicity of $(\be,\p)$ it will follow that $f^*(\sigma,x)$ is constant for $\mathbb{P}$-a.e. $\sigma\in\Omega$. 

We are left to show (\ref{estimadois}). 
Let us denote by $\sum_{j=0}^{n^{-}}a_j$ the sum when $a_1$ is omitted and let $y=w_{\sigma_1}(x)$. Then we have the following estimation  
\begin{eqnarray*}
&&\left|\frac{1}{n}\sum_{j=0}^{n^{-}}f\left(w_{\sigma_j}\circ...\circ w_{\sigma_2}(x)\right)-\frac{1}{n}\sum_{j=0}^nf\left(w_{\sigma_j}\circ...\circ w_{\sigma_1}(x)\right)\right|\nonumber\\
&\leq&\frac{|f(x)|}{n}+\frac{1}{n}\sum_{j=0}^{n^{-}}\left|f\left(w_{\sigma_j}\circ...\circ w_{\sigma_2}(x)\right)-f\left(w_{\sigma_j}\circ...\circ w_{\sigma_2}(y)\right)\right|\nonumber\\
&=&\frac{|f(x)|}{n}\\
&+&\frac{1}{n}\sum_{j=0}^{n-1}\left|f\left(w_{\beta(\sigma)_j}\circ...\circ w_{\beta(\sigma)_1}(x)\right)-f\left(w_{\beta(\sigma)_j}\circ...\circ w_{\beta(\sigma)_1}(y)\right)\right|,
\end{eqnarray*}
Using the same argument applied to estimate (\ref{estima}), only using $\beta(\sigma)$ in place of $\sigma$, we see that the right side of the above inequality 
converges to zero when $n\to \infty$. This establishes (\ref{estimadois}), and completes the proof. 
\end{proof}

\section{The Complete Case}\label{completo}
In this Section we will study the more general case of complete phase space. 

\begin{definition}
Let $w:\Lambda\times X\to X$ be a continuous IFS, where $(X,d)$ is a metric space. We say that $w$ is \emph{weakly$^*$ hyperbolic}  if for all $x,y\in X$ and $\sigma\in \Om$ we have
$$\lim_{n\to +\infty}d(w_{\sigma_{1}...\sigma_{n}}(x),w_{\sigma_{1}...\sigma_{n}}(y))=0$$
and this convergence is uniform in $\Om$ and locally uniform in $X$. This means that there exists $\eta>0$ such that for all $\eps>0$ there exists $n_{0}=n_{0}(\eps)$ such that if $n\geq n_{0}$, then
$$d(w_{\sigma_{1}...\sigma_{n}}(x),w_{\sigma_{1}...\sigma_{n}}(y))<\eps,$$
for every $\sigma\in\Om$ and every $x,y$ such that $d(x,y)<\eta.$
\label{weakestrela}
\end{definition}

Our next result says that in the case of compact phase space the two definitions (weak and weak$^*$ hyperbolicity) are the same.

\begin{theorem}\label{equivdef}
Let us suppose that $\Lambda$ and $X$ are compact metric spaces. Then an IFS $w:\Lambda\times X\to X$ is weakly$^*$ hyperbolic if and only if it is weakly hyperbolic.
\end{theorem}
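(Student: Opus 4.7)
The plan is to prove the two implications separately. The easy direction is that weak hyperbolicity implies weak$^*$ hyperbolicity, which is a direct consequence of Lemma \ref{lemaprincipal}. The reverse implication requires a compactness argument to pass from the local-uniform estimate within $\eta$ to a diameter bound on the full image $w_{\si_1\ldots\si_n}(X)$.

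For the first direction, suppose $w$ is weakly hyperbolic. Lemma \ref{lemaprincipal} supplies, for every $\eps>0$, some $n_{0}=n_0(\eps)$ with $Diam(w_{\si_1\ldots\si_n}(X))<\eps$ for all $n\geq n_0$ and every $\si\in\Om$. Since $d(w_{\si_1\ldots\si_n}(x),w_{\si_1\ldots\si_n}(y))\leq Diam(w_{\si_1\ldots\si_n}(X))$ for any $x,y\in X$, both the pointwise convergence and the joint local-uniform clause of the weak$^*$ hyperbolicity definition are satisfied with any $\eta>0$ (one may take, for instance, $\eta=Diam(X)+1$).

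For the converse, assume $w$ is weakly$^*$ hyperbolic with associated radius $\eta>0$. Fix $\si\in\Om$ and $\eps>0$; by compactness of $X$, choose a finite cover $X=\bigcup_{i=1}^N B(x_i,\eta/2)$. The local-uniform clause yields $n_1$ such that, for all $n\geq n_1$, $d(w_{\si_1\ldots\si_n}(u),w_{\si_1\ldots\si_n}(v))<\eps$ whenever $d(u,v)<\eta$. In parallel, the pointwise convergence applied to the \emph{finitely many} pairs $(x_i,x_j)$, $i,j\in\{1,\ldots,N\}$, furnishes $n_2$ such that $d(w_{\si_1\ldots\si_n}(x_i),w_{\si_1\ldots\si_n}(x_j))<\eps$ for every $i,j$ and every $n\geq n_2$. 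For arbitrary $x,y\in X$, pick indices $i,j$ with $d(x,x_i),d(y,x_j)<\eta/2$; by the triangle inequality,
$$d(w_{\si_1\ldots\si_n}(x),w_{\si_1\ldots\si_n}(y))<3\eps\quad\text{for every}\quad n\geq\max(n_1,n_2).$$
Hence $Diam(w_{\si_1\ldots\si_n}(X))\leq 3\eps$ for all sufficiently large $n$, so $Diam(w_{\si_1\ldots\si_n}(X))\to 0$ and $w$ is weakly hyperbolic in the sense of Definition \ref{wh}.

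The main obstacle is the converse direction: arbitrary points of $X$ may lie farther apart than $\eta$, so the local-uniform clause does not control them directly. The idea is to use compactness of $X$ to reduce the problem to controlling the finitely many ``reference'' distances $d(w_{\si_1\ldots\si_n}(x_i),w_{\si_1\ldots\si_n}(x_j))$ (where just pointwise convergence is enough, precisely because the index set is finite), and then fill in the rest of $X$ by $\eta/2$-triangulation through the centers of the cover.
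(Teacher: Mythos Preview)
Your proof is correct. The forward direction matches the paper's argument exactly, relying on Lemma \ref{lemaprincipal}.

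For the converse you take a genuinely different route. The paper realizes the diameter at each level by a pair $(x_n,y_n)$, passes to convergent subsequences $x_{n_k}\to x$, $y_{n_k}\to y$, and then combines the local-uniform estimate (for the pairs $(x_{n_k},x)$ and $(y_{n_k},y)$, which are eventually $\eta$-close) with the pointwise convergence for the single pair $(x,y)$; the monotonicity of the nested sequence $w_{\si_1\ldots\si_n}(X)$ upgrades subsequential convergence of the diameters to full convergence. Your argument instead fixes a finite $\eta/2$-cover $\{x_1,\ldots,x_N\}$ of $X$ once and for all, invokes pointwise convergence only on the finitely many center pairs $(x_i,x_j)$, and triangulates any pair $(x,y)$ through its nearest centers. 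This avoids both the subsequence extraction and the monotonicity observation, and is arguably the more elementary of the two. The trade-off is that your $n_2$ depends on the fixed $\si$ through the center pairs, so you recover exactly Definition \ref{wh} (pointwise in $\si$), whereas the paper's version also only yields this; in both proofs the uniformity in $\si$ of Lemma \ref{lemaprincipal} is a separate matter.
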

\begin{proof}
Suppose that $w$ is weakly hyperbolic. If $\sigma \in \Om$ and $x,y\in X$ then
$$d(w_{\sigma_{1}...\sigma_{n}}(x),w_{\sigma_{1}...\sigma_{n}}(y))\leq Diam(w_{\sigma_{1}...\sigma_{n}}(X)).$$
Since $w$ is weakly hyperbolic, we obtain that
$$\lim_{n\to \infty}d(w_{\sigma_{1}...\sigma_{n}}(x),w_{\sigma_{1}...\sigma_{n}}(y))=0.$$
By Lemma \ref{lemaprincipal} this convergence is uniform in $\Om$ and $X$, which implies that $w$ is weakly$^{*}$ hyperbolic.

Reciprocally, assume that $w$ is weakly$^{*}$ hyperbolic, and take $\sigma\in \Om$. By compactness of $X$ we get sequences $(x_{n})$ and $(y_{n})$ on $X$ such that
$$Diam(w_{\sigma_{1}...\sigma_{n}}(X))=d(w_{\sigma_{1}...\sigma_{n}}(x_{n}),w_{\sigma_{1}...\sigma_{n}}(y_{n})),\:\:\textrm{for all}\:\:n\in \N.$$
Since $\{w_{\sigma_{1}...\sigma_{n}}(X)\}$ is a nested sequence, it is enough to show that
\begin{equation}
\label{subesquence}
d(w_{\sigma_{1}...\sigma_{n_k}}(x_{n_k}),w_{\sigma_{1}...\sigma_{n_k}}(y_{n_k}))\to 0,\:\:\textrm{for some sequence}\:\:n_k\to\infty.
\end{equation} 
For this, we can use the compactness of $X$ and get subsequences $x_{n_k}\to x$ and $y_{n_k}\to y$ on $X$. We will show that $n_k$ is the desired sequence. Indeed, take $\eps>0$ and consider $\eta>0$ given by the definition of weak$^*$ hyperbolicity. There exists $k_{1}\in \N$ such that if $k\geq k_{1}$ then
\begin{eqnarray}
d(x_{n_k},x)<\eta\,\,\,\textrm{and}\,\,\,d(y_{n_k},y)<\eta.
\label{uniformlocal}
\end{eqnarray}
Since we are assuming that $w$ is a weakly$^*$ hyperbolic IFS, we obtain $k_{2}\in \N$ such that if $k\geq k_{2}$ then
\begin{eqnarray}
d(w_{\sigma_{1}...\sigma_{n_k}}(x),w_{\sigma_{1}...\sigma_{n_k}}(y))<\eps
\label{convergpontual}
\end{eqnarray}
Finally, consider $k_{0}=\max\{k_{1},k_{2}\}$. If $k\geq k_{0}$ by using (\ref{uniformlocal}), (\ref{convergpontual}) and the local uniformity of definition \ref{weakestrela} we get
\begin{eqnarray*}
d(w_{\sigma_{1}...\sigma_{n_k}}(x_{n_k}),w_{\sigma_{1}...\sigma_{n_k}}(y_{n_k}))&\leq& d(w_{\sigma_{1}...\sigma_{n_k}}(x_{n_k}),w_{\sigma_{1}...\sigma_{n_k}}(x))\nonumber \\
&+&d(w_{\sigma_{1}...\sigma_{n_k}}(x),w_{\sigma_{1}...\sigma_{n_k}}(y))\nonumber \\
&+&d(w_{\sigma_{1}...\sigma_{n_k}}(y),w_{\sigma_{1}...\sigma_{n_k}}(y_{n_k}))\nonumber \\
&\leq& 3\eps. \nonumber \\
\end{eqnarray*}
This shows that (\ref{subesquence}) holds and completes the proof.
\end{proof}

\subsection{{Proof of Theorem \ref{thm:1}}}

We shall prove the existence of an attractor in the complete case. 
Our arguments require a mild technical assumption on the phase space, which we now define.

\begin{definition}
Let $(M,d)$ be a metric space. Given $\eps>0$ and $x,y\in M$ an $\eps$-\emph{chain} joining $x$ and $y$ is a sequence $x_0=x,\:x_1,...,x_n=y$ of points in $M$ and such that $d(x_i,x_{i+1})<\eps,$ for every $i=0,...,n-1$. The number $n+1$ is the number of elements of the chain. We say that $M$ is $\eps$-\emph{chainable}, if for any $x,y\in M$ there exists an $\eps$-chain joining $x$ and $y$. 
\end{definition}

Later we shall provide examples of $\eps$-chainable metric spaces.

\begin{proof}[Proof of Theorem \ref{thm:1}]
We will prove that the Hutchinson-Barnsley operator is an asymptotic contraction on $(\mathcal{K}(X),d_H)$. Take $\eps>0$. Consider $\eta>0$ and $n_{0}=n_0(\eps)$, given by definition \ref{weakestrela}. 
Let us suppose that $d_H(A,B)<\eta$, for some  $A,B\in \mathcal{K}(X)$. We have that
$$\mathcal{F}^{n}(A)=\bigcup_{\sigma\in\Om,x\in A}w_{\sigma_{1}...\sigma_{n}}(x)\:\:\textrm{and}\:\:\mathcal{F}^{n}(B)=\bigcup_{\sigma\in\Om,y\in B}w_{\sigma_{1}...\sigma_{n}}(y).$$
If $z=w_{\sigma_{1}...\sigma_{n}}(a)$, with $a\in A$ then, using that $d_H(A,B)<\eta$, it follows that there exists $b\in B$ such that $d(a,b)<\eta$. Then, we obtain
$$d(w_{\sigma_{1}...\sigma_{n}}(a),w_{\sigma_{1}...\sigma_{n}}(b))<\eps\,\,\textrm{if}\,\,n\geq n_{0}.$$
Analogously, if $c=w_{\sigma_{1}...\sigma_{n}}(b)$, with $b\in B$, then, there exists $a\in A$ such that $d(a,b)<\eta$ and we have
$$d(w_{\sigma_{1}...\sigma_{n}}(a),w_{\sigma_{1}...\sigma_{n}}(b))<\eps\,\,\textrm{if}\,\,n\geq n_{0}.$$
Therefore,
$$d_H(\mathcal{F}^{n}(A),\mathcal{F}^{n}(B))<\eps\,\,\,\textrm{for all}\:\:n\geq n_{0}.$$ 
It remains to show that
$$d_H(\mathcal{F}^{n}(A),\mathcal{F}^{n}(B))\stackrel{n}{\longrightarrow}0$$
for any $A,B\in\SK(X)$. Since $\SK(X)$ is $\eta$-chainable, there exists compact sets $\{K_{1},...,K_{n}\}$ with $K_{1}=A,K_{n}=B$ and $d_H(K_{i},K_{i+1})<\eta\,\,\,\textrm{if}\,\,1\leq i\leq n-1$. So
\begin{eqnarray*}
d_H(\mathcal{F}^{n}(A),\mathcal{F}^{n}(B))&<&d_H(\mathcal{F}^{n}(A),\mathcal {F}^{n}(K_{2}))\\
&+&...+d_H(\mathcal{F}^{n}(K_{n-1}),\mathcal{F}^{n}(B)), 
\end{eqnarray*}
and then we have that $d_H(\mathcal{F}^{n}(A),\mathcal{F}^{n}(B))\longrightarrow 0$ when $n\to\infty$.
By Theorem~\ref{Jachymski} we have an atractor $K\in \mathcal{K}(X)$ that is also an invariant set, since $\mathcal{F}$ is continuous by Lemma \ref{continuidade}. The proof is now complete.
\end{proof}

As application we have two settings where our result applies.

\begin{corollary}
Let $(X,||.||)$ be a Banach space and $d$ its induced metric. If $w$ is a weakly hyperbolic IFS on $(X,d)$ then $\mathcal{F}$ has an attractor $K$ that is also a compact invariant set.
\label{espacosnormados}
\end{corollary}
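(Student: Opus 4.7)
The plan is to reduce to Theorem \ref{thm:1}. Since a Banach space is complete and $\Lambda$ is assumed compact, the only hypothesis requiring verification is that $(\mathcal{K}(X), d_H)$ is $\eps$-chainable for every $\eps > 0$. I would deduce this by using the linear structure of $X$ to exhibit an explicit continuous path in $\mathcal{K}(X)$ between any two compact sets, and then discretize it.

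The candidate path between given $A, B \in \mathcal{K}(X)$ is the Minkowski-type interpolation
$$K_t := \{(1-t)a + tb : a \in A,\ b \in B\},\qquad t \in [0,1].$$
Each $K_t$ is the continuous image of the compact set $A \times B$ under $(a,b)\mapsto (1-t)a+tb$, hence compact. Moreover $K_0 = A$ (the second coordinate drops out of the formula) and $K_1 = B$, so this family joins $A$ to $B$ inside $\mathcal{K}(X)$.

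The next step is the Lipschitz estimate $d_H(K_t, K_s) \leq C|t-s|$, where $C := \sup_{a \in A,\, b \in B}\|b - a\|$, which is finite by compactness. This follows from the direct computation that for each $x = (1-t)a + tb \in K_t$, the point $y = (1-s)a + sb$ lies in $K_s$ and satisfies $\|x - y\| = |t - s|\,\|b - a\| \leq C|t - s|$, with an analogous bound in the reverse direction. Given $\eps > 0$, a uniform partition of $[0,1]$ into subintervals of length less than $\eps/C$ then produces an $\eps$-chain $A = K_{t_0}, K_{t_1},\ldots,K_{t_n} = B$ in $(\mathcal{K}(X), d_H)$, as required. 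With this verified, Theorem \ref{thm:1} delivers the attractor directly.

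I do not anticipate any serious obstacle in this argument: the convexity of $X$ is doing all the work. In fact, the same strategy should adapt to any complete metric space in which compact sets can be continuously joined in the Hausdorff metric (for instance, a complete Riemannian manifold using geodesic segments in place of linear segments), which is presumably the point of isolating the $\eps$-chainability hypothesis in Theorem \ref{thm:1}.
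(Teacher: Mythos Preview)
Your proof is correct and follows the same overall strategy as the paper: verify that $(\mathcal{K}(X),d_H)$ is $\eps$-chainable by exhibiting a continuous (indeed Lipschitz) path between any two compact sets using the linear structure of $X$, then apply Theorem \ref{thm:1}. The only difference is in the choice of path: the paper contracts each compact set $B$ to a fixed point $x$ via $t\mapsto\{tx+(1-t)b:b\in B\}$ and then concatenates two such paths, whereas you use the direct Minkowski interpolation $K_t=(1-t)A+tB$; both work, and your Lipschitz estimate is slightly sharper than the paper's bare continuity argument.
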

\begin{proof}
Let us prove that $(\SK(X),d_H)$ is $\eps$-chainable for every $\eps>0$ and apply the last theorem. 
First we claim that if $B\in\mathcal{K}(X)$, and $x\in X$, then, there exists a continuous map $\psi_B:[0,1]\to \mathcal{K}(X)$ such that  $\psi_B(0)=B$ and $\psi_B(1)=\{x\}$.

To prove this claim, let us define the map $\phi:[0,1]\times X\to X$ given by $\phi(t,y)=tx+(1-t)y$ and the partial map $\phi_{t}:X\to X$ given by $\phi_{t}(x)=\phi(t,x).$ Consider the map $\psi:[0,1]\to \mathcal{K}(X)$, defined by
$$\psi(t)=\phi_{t}(B).$$
Clearly $\phi$ is continuous and so $\psi(t)$ is compact for all $t\in[0,1].$ Also, $\psi(0)=B$ and $\psi(1)=\{x\}$. It remains to prove that $\psi$ is continuous. 
In fact, given $\eps>0$ there exists $\delta>0$ such that if $|t_1-t_2|<\de$ then $d(\phi_{t_1}(b),\phi_{t_2}(b))<\eps,$ for every $b\in B$. Hence, if $|t_{1}-t_{2}|<\delta$ then  $d_H(\psi(t_{1}),\psi(t_{2}))<\eps$ which proves the continuity of $\psi$ and finishes the claim.

Given $A,B\in\SK(X)$, we can define a continuous map $\xi:[0,1]\to\SK(X)$ such that $\xi(0)=A$ and $\xi(1)=B$ as follows: fix an arbitrary point $x\in X$ and put 
$$\xi(t)=\psi_B(2t),\:\:\textrm{for}\:\:t\in[0,\frac{1}{2}]$$
and
$$\xi(t)=\psi_A(2-2t),\:\:\textrm{for}\:\:t\in[\frac{1}{2},1].$$ 
Once we have defined this continuous map, it can be easily seen that there is an $\eps$-chain joining $A$ and $B$, for every $\eps>0$. 
\end{proof}

\begin{corollary}
Let $(X,g)$ be a complete Riemannian manifold. Let $d$ be the metric induced on $X$. Suppose that $w$ is a weakly hyperbolic IFS on $(X,d)$. Then $\mathcal{F}$ has an attractor $K$ that is also a compact invariant set.
\label{variedades}
\end{corollary}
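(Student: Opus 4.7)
The plan is to verify the hypothesis of Theorem \ref{thm:1}, namely that $(\SK(X), d_H)$ is $\eps$-chainable for every $\eps > 0$, and then to invoke that theorem. The Riemannian structure enters only through the Hopf--Rinow theorem, which guarantees that any two points of the complete Riemannian manifold $(X, g)$ are joined by a minimizing geodesic; in particular $(X,d)$ is a path-connected metric space. I may and will assume that $X$ is connected (otherwise I apply the result to each connected component separately).

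Fix $\eps > 0$ and $A, B \in \SK(X)$. The first reduction is to replace both sets by finite approximations: covering $A$ by finitely many open balls of radius $\eps/2$ centered at points of $A$, the set $F_A$ of those centers satisfies $d_H(A, F_A) < \eps$, and similarly one obtains $F_B$ for $B$. Thus $(A, F_A)$ and $(F_B, B)$ are already $\eps$-chains of length two, and it remains to build an $\eps$-chain joining $F_A$ and $F_B$ inside $\SK(X)$.

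The main step is to collapse a finite set to a singleton through an $\eps$-chain, which I prove by induction on the cardinality $n$ of $F = \{x_1, \dots, x_n\} \subset X$. For $n \ge 2$, I take a geodesic $\gamma : [0,1] \to X$ with $\gamma(0) = x_n$ and $\gamma(1) = x_1$ (available by Hopf--Rinow), and set $F_t := \{x_1, \dots, x_{n-1}, \gamma(t)\}$. The map $t \mapsto F_t$ is continuous in the Hausdorff metric (only one point varies, and it does so continuously), hence uniformly continuous on the compact interval $[0,1]$, so any sufficiently fine partition $0 = t_0 < \cdots < t_N = 1$ produces an $\eps$-chain from $F$ to $\{x_1, \dots, x_{n-1}\}$; the inductive hypothesis then reduces further to $\{x_1\}$. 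Applying this to both $F_A$ and $F_B$ reduces the problem to joining two singletons $\{a_1\}$ and $\{b_1\}$, which is done by subdividing a geodesic from $a_1$ to $b_1$ into steps of Hausdorff size less than $\eps$.

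Concatenating all pieces yields the required $\eps$-chain from $A$ to $B$ in $\SK(X)$, and Theorem \ref{thm:1} furnishes the attractor. I do not anticipate any serious obstacle; the one point worth flagging is that, in contrast to the Banach-space proof of Corollary \ref{espacosnormados}, one cannot directly mimic the continuous-homotopy argument by sliding each point of $B$ along a geodesic toward a fixed basepoint, because the cut locus of a generic Riemannian manifold obstructs a continuous global selection of geodesics. The $\eps$-chain formulation bypasses this difficulty without any need for a globally defined continuous homotopy in $\SK(X)$.
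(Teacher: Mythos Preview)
Your proof is correct and takes a genuinely different route from the paper's. The paper attempts to mimic the Banach-space argument (Corollary~\ref{espacosnormados}) directly: fixing a basepoint $x$, it chooses for each $b\in B$ a geodesic $\gamma_b$ from $b$ to $x$, sets $\psi(t)=\{\gamma_b(t):b\in B\}$, and asserts that ``geodesics vary smoothly'' to conclude $\psi:[0,1]\to\SK(X)$ is a continuous path from $B$ to $\{x\}$. As you yourself flag in your final paragraph, this step is delicate: a continuous global selection $b\mapsto\gamma_b$ need not exist once $B$ meets the cut locus of $x$, so compactness of $\psi(t)$ and continuity of $\psi$ are not automatic. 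Your approach sidesteps this completely by exploiting the weaker $\eps$-chainability requirement: passing first to finite approximations $F_A,F_B$ and then collapsing a finite set to a singleton by moving one point at a time along a single geodesic needs no continuous family of geodesics, only Hopf--Rinow for individual pairs. This buys you a cleaner and more robust argument.

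One minor caveat: your parenthetical ``otherwise I apply the result to each connected component separately'' is not quite right, since the maps $w_\lambda$ may permute components and Theorem~\ref{thm:1} requires $\eps$-chainability of the whole $\SK(X)$, which fails when components are at positive distance. Simply taking connectedness as a standing hypothesis (as is standard for Riemannian manifolds) is the honest fix.
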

\begin{proof}
Fix a point $x\in X$. Take $B\in\SK(X)$. For any $b\in B$, consider a geodesic $\ga_b:[0,1]\to X$ joining $b$ and $x$. By a reparametrization, we can assume that the domain of every $\ga_b$ is the unity interval. Since geodesics vary smoothly, the set $\psi(t)=\{\ga_b(t);b\in B\}$ is a compact set and we have a continuous map $\psi:[0,1]\to\SK(X)$ with $\psi(0)=B$ and $\psi(1)=\{x\}$. The rest of the proof is analogous to that of the preceding corollary.   
\end{proof}

\subsection{{Proof of Theorem \ref{medidainvariantecompleto}}}

Here we give a result about invariant measures on the complete setting. Our arguments for this require a stronger form the $\eps$-chainable property, which nevertheless is satisfied by our previous examples.

\begin{definition}
Given a number ${\eps>0}$, we say that a metric space $X$ is \emph{uniformly} ${\eps}$-\emph{chainable on balls} if for every ball $B(a,r)\subset X$ there exists an integer $k=k(a,r,\eps)$ such that for every $x,y\in B(a,r)$ there exists an ${\eps}$-chain, with at most $k$ elements, joinning $x$ and $y$. 
\end{definition}

\begin{remark}
From the proofs of Corollary\ref{espacosnormados} and \ref{variedades} one sees that
every normed vector space and every complete manifold are examples of uniformly ${\eps}$-chainable metric spaces on balls, for every ${\eps}>0$. 
\end{remark}

\begin{proof}[Proof of Theorem \ref{medidainvariantecompleto}]
To prove that there exists a unique invariant measure $\nu\in\mathcal{M}(X)$ such that $supp(\nu)\subset K$, the arguments are the same used in the proof of Theorem \ref{teoremaatratorglobalmetrico} and then we only recall the main steps.
In fact, since $K$ is a compact invariant set then $w_{\lambda}(K)\subset K$ for each $\lambda\in\Lambda$ and then we can work with $w|_{K}:\Lambda\times K\longrightarrow K$. If $\mu\in\mathcal{M}(X)$ is such that $supp(\mu)\subset K$ then it is obvious from the invariance of $K$ that $supp(T_{p}(\mu))\subset K$ and then the map $T_{p}|_{K}:\mathcal{M}(K)\longrightarrow \mathcal{M}(K)$ is well defined.
\begin{enumerate}
\item The first step: For each $a\in K$, the sequence of measures $\{T_{p}^{n}(\delta_{a})\}$ is convergent on the weak$^*$ topology (or Hutchinson metric) in $\mathcal{M}(K).$
\item The second step: For each $\mu\in\mathcal{M}(K)$ and $a\in K$, the sequences $\{T_{p}^{n}(\delta_{a})\}$ and $\{T_{p}^{n}(\mu)\}$ has the same limit on the weak$^*$ topology (or Hutchinson metric).
\item The third step: The transfer operator $T_{p}$ is continuous on the weak$^*$ topology(or Hutchinson metric) in $K$.
\item The last step: If $\nu$ denotes the measure given by first step then $supp(\nu)=K.$
\end{enumerate}

It follows from the steps above that the measure $\nu$ is the only invariant measure for $w$ such that $supp(\nu)\subset K$ and in fact $supp(\nu)=K$. 

To prove the last statement we proceed as follows.

Let $\mu\in\SM(X)$ be a probability measure with compact support. We want to show that $$H(T_p^n(\mu),T_p^n(\nu))\to 0,\:\:\:\textrm{when}\:\:n\to\infty.$$ Since for any point $a\in K$, $T_p^n(\de_a)\to\nu$, when $n\to\infty$, in the Hutchinson topology, it suffices to prove that $$H(T_p^n(\mu),T_p^n(\de_a))\to 0,\:\:\:\textrm{when}\:\:n\to\infty.$$ Indeed, given $f\in\lip$, since $$\int_XfdT_p^n(\mu)=\int_X\int_{\La^n}f\circ\Ga(\si,n,x)dp^nd\mu$$ and $$\int_XfdT_p^n(\de_a)=\int_{\La^n}f\circ\Ga(\si,n,a)dp^n=\int_X\int_{\La^n}f\circ\Ga(\si,n,a)dp^nd\mu,$$ we have that 
\begin{eqnarray*}
&&\left|\int_XfdT_p^n(\mu)-\int_XfdT_p^n(\de_a)\right|\\
&\leq&\int_X\int_{\La^n}|f\circ\Ga(\si,n,x)-f\circ\Ga(\si,n,a)dp^nd\mu\\
&\leq&\int_X\int_{\La^n}d(\Ga(\si,n,x),\Ga(\si,n,a))dp^nd\mu, 
\end{eqnarray*}
and thus 
\begin{equation}
\label{cavera}
H(T_p^n(\mu),T_p^n(\de_a))\leq\int_X\xi_nd\mu,
\end{equation}
where $\xi_n(x)=\int_{\La^n}d(\Ga(\si,n,x),\Ga(\si,n,a))dp^n.$ 

Now, take $r>0$ such that $\supp(\mu)\subset B(a,r)$. Then, $\int_X\xi_nd\mu=\int_{B(a,r)}\xi_nd\mu$. We claim that $\xi_n\to 0$ uniformly in $B(a,r)$. Indeed, take $\delta>0$. Since $X$ is uniformly $\eta$-chainable on $B(a,r)$, there exists an integer $k=k(a,r,\eta)>0$ such that for every $x\in B(a,r)$ there exists an $\eta$-chain $x_0=x,...,x_n=a$, with at most $k$ elements. By weak hyperbolicity, there exists $n_0=n_0(\frac{\delta}{k})>0$ such that $n\geq n_0$ implies that $$d(\Ga(\si,n,x),\Ga(\si,n,y))\leq\frac{\delta}{k},$$ for every $\si\in\Om$ and for every pair $x,y\in X$ with $d(x,y)<\eta$. Therefore, if $n\geq n_0$ we have that 
\begin{eqnarray*}
&&d(\Ga(\si,n,x),\Ga(\si,n,a))\\
&\leq&\sum_{j=0}^nd(\Ga(\si,n,x_j),\Ga(\si,n,x_{j+1}))\\
&\leq&\sum_{j=0}^n\frac{\delta}{k}<\delta,
\end{eqnarray*} for every $\si\in\Om$, and it follows that $$\int_{\La^n}d(\Ga(\si,n,x),\Ga(\si,n,a))dp^n<\delta,$$  for every $x\in B(a,r)$. This proves our claim.

By claim and inequality (\ref{cavera}) we conclude that 
$$H(T_p^n(\mu),T_p^n(\de_a))\to 0,$$ finishing the proof. 
\end{proof}

\section{Drawing the Attractor}\label{s.caos}

Here we take inspiration from \cite{BV1} to give a result about how to visualize the attractor through orbits of the IFS instead of computing the full Hutchinson-Barnsley operator.
{The result we shall prove is closely related with the so-called chaos game, which is studied in \cite{BV1} for IFS with finite parameter space.}

For the convenience of the reader, let us recall some definitions given in the introduction.

\begin{definition}
An \emph{orbit of the IFS} starting at {a point $x\in X$} is a sequence {$\{x_k\}_{k=0}^{\infty}\subset X$} such that $x_0=x$, $x_{k+1}=w_{\lambda_k}(x_k)$, 
for some sequence $\{\lambda_k\}_{k=1}^{\infty}\in\Om$ in the parameter space.
\end{definition}

\begin{definition}
Given an IFS $w:\Lambda\times X\to X$ with attractor $A$, we say that an orbit starting at $x$ \emph{draws} the attractor if $$A=\lim_{k\to\infty}\{x_n\}_{n=k}^{\infty},\:\:\:\textrm{in the Hausdorff metric}.$$
\end{definition}
 
Given an IFS $w:\Lambda\times X\to X$ with attractor $A$ (with basin $U$) and a point $x\in X$ we shall denote by $\cA(x)\subset\Om$ the set formed by the sequences 
$\{\lambda_k\}_{k=1}^{\infty}$ such that the correspondent orbit $x_0=x$, $x_k=w_{\lambda_k}(x_{k-1})$ draws the attractor. 

{Our goal is to prove that there exists a ``large'' set of orbits which draws the attractor of the IFS. More precisely, consider a probability measure $m$
in the space $\Om$. We would like to say that for every $x\in X$, the set $\cA(x)$ has full measure. 
In \cite{BV1} the authors consider, for a parameter space $\Lambda=\{1,...,N\}$, a probability measure $m$ over $\Om$ such that there exists $a\in(0,1/N]$ such that the set
$$\{\sigma\in\Om;\forall\:n,j\:\textrm{there exists a probability at least}\:a\:\textrm{that}\:\sigma_n=j\}$$
has total probability. 
For instance, the Bernoulli measures in $\Om$ are an example of such measures. In our case, we consider measures in the parameter space with a uniform lower bound for the size of balls.}

\begin{definition}
We say that a probability $p\in\cP(\La)$ is \emph{fair} if there exists a positive function $f:(0,+\infty)\to(0,1]$ such that $$p\left(B(\lambda,\delta)\right)\geq f(\delta),\:\:\:\textrm{for every}\:\:\:\lambda\in\Lambda.$$ 
\end{definition}

{There are plenty of examples of fair measures, such as} the Lebesgue measure in $\R^n$ and the Haar measure of a Lie group. We will use $\p=p^{\N}$, as before.

Recall that a metric space is said to be \emph{proper} if every closed ball is compact. With these notations, Theorem~\ref{caosgamecor} restates as follows.

\begin{theorem}
\label{caosgame}
Let $X$ be a proper complete metric space, and $w:\Lambda\times X\to X$ a continuous IFS. Suppose that $w$ has an attractor $A$ with local basin of attraction $U$. Then, 
for every point $x\in U$, $\p\left(\cA(x)\right)=1$.  
\end{theorem}

We remark that the class of probabilities used in \cite{BV1} seems to be more general than {those we consider here}, 
but we don't have any definitive assertion about this. Before proving Theorem \ref{caosgame} we prepare some lemmas. The first one is quite elementary and we left its proof to the reader.

\begin{lemma}
Let $X$ be a complete metric space and $C\subset X$ compact. Then $X$ is proper if and only if $B(C,r)$ is compact for every $r>0$
\end{lemma}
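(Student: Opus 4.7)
The statement is an ``iff'' between two properties of the complete metric space $X$, so the plan is to argue each implication separately. For the ``if'' direction, note that singletons are compact, and for any $x \in X$ and $r > 0$ the closed ball $\{y \in X : d(x,y) \leq r\}$ coincides with $B(\{x\}, r)$ in the notation $B(A,t) := \{y \in X : d(y,A) \leq t\}$ introduced earlier in the paper. Assuming closed neighborhoods of compact sets are compact therefore yields properness of $X$ immediately.

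For the ``only if'' direction, fix a compact set $C \subset X$ and a radius $r > 0$; the goal is to show $B(C,r)$ is compact. The first step is a boundedness reduction: pick any basepoint $x_0 \in C$ and set $R := \max_{c \in C} d(x_0, c)$, which is finite and attained because $c \mapsto d(x_0, c)$ is continuous on the compact set $C$. By the triangle inequality, if $y \in B(C, r)$ and $c \in C$ realizes $d(y, c) \leq r$, then $d(x_0, y) \leq d(x_0, c) + d(c, y) \leq R + r$, so $B(C, r) \subset \{y \in X : d(x_0, y) \leq R + r\}$. Since $X$ is proper, this enclosing closed ball is compact.

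The second step is to observe that $B(C, r)$ is itself closed in $X$, because the function $y \mapsto d(y, C)$ is $1$-Lipschitz, hence continuous, and $B(C, r)$ is the preimage of the closed interval $[0, r]$ under it. Combining the two observations, $B(C, r)$ is a closed subset of a compact set, hence compact, which closes the argument. I do not foresee any genuine obstacle; the only step that requires mild care is the boundedness reduction, which relies only on the standard fact that continuous real-valued functions attain their maximum on compact sets.
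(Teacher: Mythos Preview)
Your argument is correct. The paper does not actually give a proof of this lemma --- it states that the result ``is quite elementary and we left its proof to the reader'' --- so there is nothing to compare against; your two-step treatment of the ``only if'' direction (bounded containment in a closed ball via compactness of $C$, then closedness of $B(C,r)$ via continuity of $y\mapsto d(y,C)$) is exactly the standard route.

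One small remark on the ``if'' direction: as phrased in the paper, $C$ is a fixed compact set rather than a universally quantified one, so instantiating with a singleton $\{x\}$ is not literally available. The fix is immediate: for any $x\in X$ and $s>0$, pick $c_0\in C$ and observe $\overline{B}(x,s)\subset B(C,\,s+d(x,c_0))$, which is compact by hypothesis; since closed balls are closed, they are compact. (This also shows the lemma tacitly requires $C\neq\emptyset$.)
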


From now on, we assume that we are under the assumptions of Theorem~\ref{caosgame}. Note that, since $\cF$ is continuous (see Lemma~\ref{continuidade}), we have that $A=\cF(A)$. The next lemma provides some (uniform) 
control for the speed of convergence of the iterates $\cF^k(\{x\})$ to the attractor, but for points $x$ close to the attractor. This control will be one of the key points to prove Theorem~\ref{caosgame}. This lemma was proved in \cite{BV1} for 
$\La:=\{1,...,N\}$. The proof is the same, and we give it here just for the sake of completeness.

\begin{lemma}
\label{tempodechegadaaoatrator}
Let $w:\Lambda\times X\to X$ be a continuous IFS of a proper complete metric space $X$ and compact parameter space $\Lambda$. Suppose that $w$ has a local attractor $A$ with local basin $U$. Then for any $\eps>0$ there exists an integer $N=N(\eps)$ 
such that for any $x\in(B(A,\eps))\cap U$ there is an integer $m=m(x,\eps)<N$ such that 
$$d_H\left(A,\cF^m(\{x\})\right)<\frac{\eps}{4}.$$  
\end{lemma}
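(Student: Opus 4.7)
The plan is to prove the lemma in two stages. First, I use the compactness of a small closed neighborhood of $A$ inside $U$ to obtain a uniform time $N_1$ serving every compact subset of that neighborhood. Second, I bound uniformly the time $n_x$ it takes for orbits starting in $B(A,\eps)\cap U$ to enter this small neighborhood. The key tools are the continuity of the Hutchinson--Barnsley operator $\cF$ on $\cK(X)$ (Lemma~\ref{continuidade}) and the compactness afforded by properness of $X$.

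For the first stage, since $A$ is compact and $U$ is an open neighborhood of $A$, I would pick $\eta\in(0,\eps/8)$ with $\overline{B(A,2\eta)}\subset U$. By properness of $X$, the set $\overline{B(A,2\eta)}$ is compact, and hence $\cK(\overline{B(A,2\eta)})$ is a compact subspace of $\cK(X)$. For each $C\in\cK(\overline{B(A,2\eta)})$, $C$ is a compact subset of $U$, so the attractor property yields some $m_C$ with $d_H(\cF^{m_C}(C),A)<\eps/8$; by continuity of $\cF^{m_C}$, this extends to an open neighborhood $W_C\ni C$ in $\cK(X)$ on which $d_H(\cF^{m_C}(D),A)<\eps/4$. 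Extracting a finite subcover of $\cK(\overline{B(A,2\eta)})$ from $\{W_C\}$ yields $N_1=N_1(\eps)$ such that every $C\in\cK(\overline{B(A,2\eta)})$ admits some $m\leq N_1$ with $\delta(A,\cF^m(C))<\eps/4$.

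For the second stage, define the open sets $V_n:=\{x\in X:\cF^n(\{x\})\subset B(A,2\eta)\}$; openness follows from continuity of $x\mapsto\cF^n(\{x\})\in\cK(X)$ together with openness of $B(A,2\eta)$. These form an increasing family and, since $\cF^n(\{x\})\to A$ in $d_H$ for every $x\in U$, we have $\bigcup_n V_n\supset U$. I would aim at $B(A,\eps)\cap U\subset V_{N_0}$ for some $N_0=N_0(\eps)$. Granted this, for any $x\in B(A,\eps)\cap U$, taking $n_x\leq N_0$ minimal with $x\in V_{n_x}$, the set $C_x:=\cF^{n_x}(\{x\})$ lies in $\cK(\overline{B(A,2\eta)})$, so stage one gives $m'\leq N_1$ with $\delta(A,\cF^{n_x+m'}(\{x\}))<\eps/4$; hence $m:=n_x+m'<N_0+N_1=:N$ witnesses the conclusion.

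The main obstacle is establishing the uniform hitting-time bound $N_0$ in the second stage, because $B(A,\eps)\cap U$ is in general not compact and the finite-subcover argument on $\{V_n\}$ does not apply directly. I plan to argue by contradiction: a sequence $x_k\in B(A,\eps)\cap U$ with $n_{x_k}\to\infty$ would, by compactness of $\overline{B(A,\eps)}$ (from properness of $X$), subconverge to some $x^*\in\overline{B(A,\eps)}$. A limit $x^*\in U$ is immediately ruled out by continuity of $x\mapsto\cF^n(\{x\})$ combined with the pointwise convergence $\cF^n(\{x^*\})\to A$. The delicate case is $x^*\in\overline{B(A,\eps)}\setminus U$, which must be excluded by a finer analysis of the nested closed sets $\overline{B(A,\eps)}\setminus V_n$, whose intersection is contained in $\overline{B(A,\eps)}\setminus U$; here one must exploit properness together with the minimality of $n_x$ to prevent sequences in $B(A,\eps)\cap U$ from accumulating outside $U$ with diverging hitting times.
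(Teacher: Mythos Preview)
Your two-stage architecture is considerably more elaborate than what is needed, and the unresolved ``delicate case'' in Stage~2 is a genuine gap that your outline does not close. Nothing in the hypotheses prevents a sequence $x_k\in B(A,\eps)\cap U$ from accumulating at a boundary point $x^*\in\partial U$ with $n_{x_k}\to\infty$: the attractor property says nothing about $\cF^n(\{x^*\})$ when $x^*\notin U$, and minimality of $n_{x_k}$ gives no leverage either. So as written the argument is incomplete.

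The paper's proof sidesteps this entirely by observing at the outset that one may assume $B(A,\eps)\subset U$ (since $A$ is compact and $U$ open, this holds for all sufficiently small $\eps$, which is all that is needed in the application). Under this reduction, $B(A,\eps)\cap U=B(A,\eps)$ is compact by properness of $X$, and your whole second stage evaporates. The remaining argument is then a single finite-cover step on \emph{points} of $B(A,\eps)$ rather than on $\cK(\overline{B(A,2\eta)})$: for each $x\in B(A,\eps)$ pick $m(x)$ with $d_H(A,\cF^{m(x)}(\{x\}))<\eps/8$, use continuity of $\cF^{m(x)}$ to spread this (with bound $\eps/4$) to a ball $B(x,r_x)$, extract a finite subcover, and set $N=\max_i m(x_i)+1$. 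Your Stage~1, restricted to singletons and with $2\eta$ replaced by $\eps$, is essentially this argument; the detour through $\cK(\overline{B(A,2\eta)})$ and the hitting-time analysis of Stage~2 are both unnecessary once the reduction $B(A,\eps)\subset U$ is made.
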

\begin{proof}
Without loss of generality we assume that $B(A,\eps)\subset U$. If $x\in B(A,\eps)$ then, there exists an integer $m=m(x,\eps)\geq 0$ such that $$d_H\left(A,\cF^m(\{x\})\right)<\frac{\eps}{8},$$ by definition of an atractor. Since $\cF$ is continuous, there exists $r_{x}>0$ such that for every $y\in B(x,r_{x})$ we have $$d_H\left(\cF^m(\{x\},\cF^m(\{y\})\right)<\frac{\eps}{8},$$ and thus $d_H\left(A,\cF^m(\{y\})\right)<\frac{\eps}{4}$, for every $y\in B(x,r_x)$. Since $X$ is proper, $B(A,\eps)$ is compact and so there is a finite set $\{x_1,...,x_n\}$ such that $$B(A,\eps)\subset\bigcup_{i=1}^{n}B(x_i,r_{x_i}).$$ Let $N=\max\{m(x_i,\eps);i=1,...,n\}+1$. Then, for every $x\in B(A,\eps)$, there is $i\in\{1,...,n\}$ such that $x\in B(x_i,r_{x_i})$ and therefore $$d_H\left(A,\cF^m(\{x\})\right)<\frac{\eps}{4},$$ with $m=m(x_i,\eps)<N$. This proves the lemma.
\end{proof}

Now, we will use continuity of the IFS $w$ to control orbits of nearby points. The main issue here is that this can be done uniformly in $B(A,\eps)$ due to compactness.

\begin{lemma}
\label{czeroboladotentativadois}
Let $w:\Lambda\times X\to X$ be a continuous IFS of a proper complete metric space $X$ and compact parameter space $\Lambda$. Suppose that $w$ has an attractor $A$ with local basin $U$. For every $\eps>0$, and every integer $N>0$, there exists $\delta=\delta(\eps,N)$ such that for every $m<N$, if $x\in B(A,\eps)$ and $d(\sigma_i,\lambda_i)<\delta$ in $\Lambda$, $i=1,...,m$ then $$d\left(w_{\lambda_m}\circ...\circ w_{\lambda_1}(x),w_{\sigma_m}(x)\circ...\circ w_{\sigma_1}(x)\right)<\frac{\varepsilon}{4}.$$
\end{lemma}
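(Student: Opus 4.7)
The plan is to reduce the claim to uniform continuity of a continuous function on a compact set. Since $X$ is proper and the attractor $A$ is compact, the preceding lemma guarantees that the closed $\eps$-neighborhood $\overline{B(A,\eps)}$ of $A$ is compact. This compactness is what will furnish the uniform control required to let $x$ range freely over $B(A,\eps)$.

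For each integer $m$ with $1\leq m< N$, consider the iterated composition
$$\Phi_m:\Lambda^m\times X\to X,\qquad \Phi_m(\lambda_1,\dots,\lambda_m,x):=w_{\lambda_m}\circ\cdots\circ w_{\lambda_1}(x).$$
Being a finite composition of the continuous map $w:\Lambda\times X\to X$, $\Phi_m$ is itself continuous. Endow $\Lambda^m$ with the maximum metric: then $\Lambda^m\times\overline{B(A,\eps)}$ is compact, so $\Phi_m$ restricted to this product is uniformly continuous. This yields some $\delta_m>0$ such that whenever $d(\sigma_i,\lambda_i)<\delta_m$ for every $i=1,\dots,m$ and $x\in\overline{B(A,\eps)}$, one has $d(\Phi_m(\bar\lambda,x),\Phi_m(\bar\sigma,x))<\eps/4$.

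Setting $\delta:=\min\{\delta_m:1\leq m<N\}$, which is positive because it is a minimum over finitely many positive numbers, gives the required constant. The main (and essentially only) technical point is the compactness of $\overline{B(A,\eps)}$; this is precisely where the hypothesis that $X$ be proper is being invoked, since without it the passage from pointwise continuity of $\Phi_m$ to the uniform-in-$x$ estimate stated in the lemma would fail.
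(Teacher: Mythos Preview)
Your proof is correct and in fact more economical than the paper's. The paper argues by induction on $N$: the base case is uniform continuity on the compact set $B(A,\eps)$, and for the inductive step it works on the larger compact set $Y=\bigcup_{n=0}^{N}\cF^{n}(B(A,\eps))$, obtaining a $\delta_1$ from the uniform continuity of $w$ on $\La\times Y$ and a $\delta_2$ from the induction hypothesis, then taking $\delta=\min\{\delta_1,\delta_2\}$. Your approach bypasses the induction entirely by observing that each $\Phi_m$ is already a continuous map on the compact product $\La^m\times\overline{B(A,\eps)}$, so uniform continuity is available in one stroke for each $m<N$; you then minimize over the finitely many $\delta_m$. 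Both arguments rest on the same compactness fact (properness of $X$ makes $\overline{B(A,\eps)}$ compact), but yours packages the iteration into the single map $\Phi_m$ rather than unwinding it step by step. The paper's inductive proof has the minor advantage of making explicit where intermediate points of the orbit live (namely in $Y$), which is conceptually relevant elsewhere in the section, but for the lemma as stated your direct argument is cleaner.
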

\begin{proof}
Fix $\eps>0$. The proof goes by induction on $N$. Since $B(A,\eps)$ is compact, the case $N=1$ follows by uniform continuity. Suppose that the the lemma holds for $N$, and let us prove that it also holds for $N+1$. Again, since 
$Y=\cup_{n=0}^N\cF^n(B(A,\eps))$ is a compact metric space, $w$ restricted to this set is uniformly continuous. Hence, there exists $\delta_1=\delta_1(\eps,N)>0$ such that if $\la_N,\si_N\in\La$ and $a,b\in Y$ with $d(\lambda_N,\sigma_N)<\delta_1$ in
$\Lambda$ and $d\left(a,b\right)<\delta_1$ in $Y$, then
$$d(w_{\sigma_N}(a),w_{\lambda_N}(b))<\eps.$$ By the induction hypothesis, there exists $\delta_2=\delta_2(N,\eps)$ such that if $d(\lambda_i,\sigma_i)<\delta_2$ for every $i=1,...,N-1$, then 
$$d\left(w_{\lambda_{N-1}}\circ...\circ w_{\la_1}(x),w_{\si_{N-1}}\circ...\circ w_{\si_1}(x)\right)<\delta_1.$$ Therefore, if $\delta=\min\{\delta_1,\delta_2\}$ and $d(\lambda_i,\sigma_i)<\delta$ for every $i=1,...,N$,
it follows that 
$$d\left(w_{\lambda_{N}}\circ...\circ w_{\la_1}(x),w_{\si_{N}}\circ...\circ w_{\si_1}(x)\right)<\eps,$$ and thus the case $N+1$ is true. This completes the proof.
\end{proof}

The next lemma reduces the proof of Theorem~\ref{caosgame} to a proof of a simpler statement.

\begin{lemma}
 \label{l.reducao}
Fix a point $x\in X$. Suppose that the following property holds
\begin{itemize}
 \item for each $\eps>0$ there exists $K_{\eps}>0$ such that for every $L\geq K_{\eps}$ one has a set 
$B_L\subset\Om$ with $\p(B_L)=1$, and such that if $\si\in B_L$ and if $x_k=w_{\sigma_k}(x_{k-1})$ is the $\sigma$-orbit of $x$ then $d_H(A,\{x_k\}_{k\geq L})<\eps$.
\end{itemize}
Then, there exists $\cB\subset\Om\cap\cA(x)$ with $\p(\cB)=1$.
\end{lemma}
\begin{proof}
Let $\cB_{\eps}=\cap_{L\geq K_{\eps}}B_L$. Clearly we have  $\cB_{\eps}\subset\Om$, 
with $\p(\cB_{\eps})=1$. Moreover, for every $x$-orbit $\{x_{k+1}=w_{\si_k}(x_k)\}$, generated by some sequence $\si=(\si_k)\in\cB_{\eps}$ satisfies $d_H(A,\{x_k\}_{k\geq L})<\eps$, 
for every $L\geq K_{\eps}$. Now, we take $\eps_n=\frac{1}{n}$ and define $\cB=\cap_{n}\cB_{\eps_n}$. Obviously, $\p(\cB)=1$. Moreover, it is easy to see that $\cB\subset\cA(x)$. Indeed, 
take $\si\in\cB$ and 
consider $\{x_k\}$ the orbit of $x$ generated by $\si$. For any $\eps>0$ we can take a large $n$ with $\eps_n<\eps$. Since $\si\in\cB_{\eps_n}$, we have that $L\geq K_{\eps_n}$ implies 
$d_H(A,\{x_k\}_{k\geq L})<\eps_n<\eps$. Thus, $A=\lim_{L\to\infty}\{x_k\}_{k\geq L}$, wich proves that $\cB\subset\cA(x)$. This establishes the lemma.
\end{proof}

Now, we give the proof of Theorem \ref{caosgame}.

\begin{proof}[Proof of Theorem \ref{caosgame}]
{We shall apply Lemma~\ref{l.reducao}.} So, let us fix $\eps>0$ and exhibit the integer $K_{\eps}$. By definition of an attractor, there exists $K_{\eps}$ such that $k\geq K_{\eps}$ implies that $$d_H(\cF^k(\{x\}),A)<\eps,$$ in particular, given any sequence $\{\la_k\}_{k=1}^{\infty}\in\Om$, the correspondent orbit satisfies $$x_k\in\cF^k(\{x\})\subset B(A,\eps),$$ for every $k\geq K_{\eps}$. Take $L\geq K_{\eps}$ and let us construct the set $B_L$. 

The key observation is that for any point $a$ in $B(A,\eps)$, we can find a finite sequence of parameters that ``corrects" the orbit of $a$, making it visit every portion of $A$. 

To be precise, consider a set $\{a_1,...,a_l\}\subset A$ such that $A\subset\cup_{j=1}^lB(a_j,\frac{\eps}{4})$. Observe that if a set $R\subset B(A,\eps)$ has non-empty intersection with every ball $B(a_j,\frac{\eps}{2})$ then $d_H(A,R)<\eps$. In virtue of this, we say that a finite word $\{\si_1,...,\si_n\}\subset\La$ \emph{corrects} a point $a$ if there exists $n_1,\dots,n_l\subset\{1,...,n\}$ such that $$w_{\si_{n_j}}\circ\dots\circ w_{\si_1}(a)\in B(a_j,\frac{\eps}{2}).$$
Now, observe that Lemma \ref{tempodechegadaaoatrator} implies that for each $a\in B(A,\eps)$ there is a finite word $\la_1,...,\la_m$, such that $$w_{\la_m}\circ\dots\circ w_{\la_1}(a)\in B(a_1,\frac{\eps}{4}),$$ and the length $m$ of this word is bounded by some constant $N=N(\eps)$. Applying the same reasoning with $w_{\la_m}\circ\dots\circ w_{\la_1}(a)$ in the place of $a$, we find a second finite word, with the same bound on its length, so that the orbit of $a$ under this two blocks of words now visits both balls $B(a_1,\frac{\eps}{4})$ and $B(a_2,\frac{\eps}{4})$. Continuing in this way we can find a finite correcting word with length at most $lN$, which means that the orbit of $a$ under this word visits every ball $B(a_j,\frac{\eps}{4})$. 

Also, by Lemma \ref{czeroboladotentativadois} there exists $\de=\de(\eps,N)$ such that for every finite word with the same length of this correcting word and $\de$-close to it, the correspondent orbit of $a$ visits every ball $B(a_j,\frac{\eps}{2})$.  

Since $p$ is a fair measure, we have that the $\p$-measure of the set $$C_0=\left\{\si\in\Om;\:\si_{L+1},...,\si_{L+lN}\:\textrm{corrects}\:x_L\right\}$$ is at least $f(\de)^{lN}$. By the same reason, the measure of each set $$C_j=\left\{\si\in\Om;\:\si_{L+jlN+1},...,\si_{L+(j+1)lN}\:\textrm{corrects}\:x_{L+jlN}\right\}$$ is at least $f(\de)^{lN}$. Moreover, since this sets are independent events, it follows that  $$p(\bigcap_{j=0}^{\infty}(\Om-C_j))\leq p(\bigcap_{j=0}^{t-1}(\Om-C_j))\leq(1-f(\de)^{lN})^t.$$ Therefore, $p(\cup_jC_j)=1$. 

By construction of the sets $C_j$, for every $\si\in\cup_jC_j$ the correspondent orbit satisfies $$A\subset B\left(\{x_k\}_{k\geq L},\eps\right),$$ and since $L\geq K_{\eps}$ we also have that $\{x_k\}_{k\geq L}\subset B(A,\eps)$. Thus $$d_H(A,\{x_k\}_{k\geq L})<\eps.$$ Therefore, putting $B_L=\cup_jC_j$ the result is proved.  
\end{proof}

\paragraph{Acknowledgments.}
We would like to thank Katrin Gelfert and Daniel Oliveira for presenting us the paper of Kravchenko \cite{K}.

\bigskip
\noindent
\begin{minipage}[t]{12cm}
{\bf Alexander Arbieto}\\ 
Universidade Federal do Rio de Janeiro\\
Brazil\\[2mm]
E-mail:arbieto@im.ufrj.br
 
\bigskip
\noindent
{\bf Andr\'e Junqueira}\\ 
Universidade Federal de Vi\c{c}osa\\
Brazil\\[2mm]
E-mail: andre.junqueira@ufv.br

\bigskip
\noindent
{\bf Bruno Santiago}\\ 
Universit\'e de Bourgogne\\
France\\[2mm]
E-mail: bruno.rodrigues-santiago@u-bourgogne.fr

\end{minipage}
\end{document}